\theoremstyle{definition}  \newtheorem{Theorem}{Theorem}
\theoremstyle{definition}  \newtheorem{Corollary}{Corollary}
\theoremstyle{definition}  \newtheorem{Lemma}{Lemma}
\theoremstyle{definition}  \newtheorem{Example}{Example}
\theoremstyle{definition}  \newtheorem{Remark}{Remark}
 \numberwithin{Theorem}{section}  \numberwithin{Remark}{section}
\numberwithin{Lemma}{section} \numberwithin{Corollary}{section}
\numberwithin{Example}{section} \numberwithin{equation}{section}
\title{\Large \bf Second-order differential equation with indefinite and repulsive singularities
\footnote{ Yonghui Xia acknowledges the support from 
National Natural
Science Foundation of China under Grant (No. 11671176, 11931016).}
}
\author
{
Xiaoxiao Cui$^{a}$\,\,\,\,\,
Yonghui Xia$^{a}\footnote{Corresponding author. Yonghui Xia, xiadoc@163.com;yhxia@zjnu.cn.
}$
\\
{\small \textit{$^a$ College of Mathematics and Computer Science,  Zhejiang Normal University, 321004, Jinhua, China}}\\
{\small Email:xxc\_2017@126.com; xiadoc@163.com; yhxia@zjnu.cn.}
}
\date{}
\begin{document}
\maketitle
\begin{center}
\begin{minipage}{13cm}
 {\bf Abstract}---This paper concerns a second-order differential equation with indefinite and repulsive singularities. It is the first time to study differential equation containing both indefinite and repulsive singularities simultaneously. A set of sufficient conditions are obtained for the existence of positive periodic solutions.  The theoretical underpinnings of this paper are the
positivity of Green's function and fixed point theorem in cones. Our results improve and extend the results in previous literatures. Finally,  three examples and their numerical simulations (phase diagrams and time diagrams of periodic solutions) are given to show the effectiveness of our conclusions.  \\
{\bf Keyword}---indefinite and repulsive singularities; fixed point theorem in cones; positive periodic solution.\\
 {\bf MSC}--- 34B16; 34B18; 34C25.
\end{minipage}
\end{center}

\vspace{1cm}
\section{\Large{Introduction}}

\par\noindent

There are many works on the problem of periodic solutions for the nonlinear ordinary differential equations with deviating argument, $p$-Laplacian, neutral type operator and singularity (see e.g. \cite{Amster2007, Bereanu2009, Bereanu2007, Boscaggin2020-1, Boscaggin2020, Bravo2010, Cabada1997, CX2021, Cheng2021, Cheng2018, Cheung2006, Chu2012, ChuNA, Chu2009, ChuBLMS2008, ChuBLMS2007, Chu-JDE, Fabry1986, Fonda2017, Godoy2021, Godoy2019, Hakl2010, Hakl2011, Hakl2017, Han2009, Jebelean2002, ChuD2, Jiang2005, Kong2017, Kong2015, Lazer1987, Li2017, Li2008, Lu2004, Lu2019-1, Lu2019-2, Lu2019, Martins2006, Omari1984, Regan1997, Peng2007, Pino1993, Pino1992, Torres2015,TorresNA,TorresJDE, Urena2016, Wang2007, Wang2014, Wang2010, Yu2019, Yu2022}). In this paper, we pay particular attention to the differential equations with singularity. In fact, the differential equations with singularity has attracted many mathematicians' attention (see e.g.\cite{Bereanu2007, Bravo2010, Cheng2021, CX2021, Cheng2018, Chu2012, ChuNA, Chu2009, ChuBLMS2008, ChuBLMS2007, Chu-JDE, Fonda2017, Godoy2021, Godoy2019, Hakl2010, Hakl2011, Hakl2017, Jebelean2002, ChuD2, Jiang2005, Kong2017, Kong2015, Lazer1987, Li2017, Li2008, Lu2019-1, Lu2019-2, Lu2019, Martins2006, Pino1992, Torres2015,TorresNA,TorresJDE, Urena2016, Wang2014, Yu2019, Yu2022}). By the method of upper and lower solution, Lazer and Solimini \cite{Lazer1987} initially studied the periodic solutions of the following singular equations
\begin{equation}\label{eq1.4}
    x''(t)+\frac{1}{x^\mu(t)}=m(t),
\end{equation}
and
\begin{equation}\label{eq1.5}
    x''(t)-\frac{1}{x^\mu(t)}=m(t),
\end{equation}
 where $m(t)\in C(\mathbb{R},\mathbb{R})$ is a periodic function and $\mu>0$ is a constant. It is said that equation \eqref{eq1.4} has an attractive singularity and equation \eqref{eq1.5} has a repulsive singularity.
Pino and Man\'{a}sevich  \cite{Pino1992} studied the periodic solutions for a problem arising in nonlinear
elasticity. The equations is formulated as
\begin{equation}\label{eq1.6aa}
    x''(t)+f(t,x(t))=0.
\end{equation} Further, Pino, Man\'{a}sevich and Montero,\cite{Pino1992, Pino1993} considered the equation \eqref{eq1.6aa} with singularities.
 Hakl and Torres \cite{Hakl2010} discussed  equation \eqref{eq1.6aa} with  attractive-repulsive singularities:
\begin{equation}\label{eq1.6}
    x''(t)=\frac{b(t)}{x^{\nu_1}(t)}-\frac{c(t)}{x^{\nu_2}(t)}+e(t),
\end{equation}
where $b(t),~c(t)\in C(\mathbb{R},(0,+\infty))$ are periodic functions, $\nu_1>\nu_2>0$ are constants. The positivity of $b(t),~c(t)$ states the equation has both attractive and repulsive singularities. Bravo and Torres \cite{Bravo2010}   investigated equation \eqref{eq1.6aa} with an indefinite singularity as follows:
\begin{equation}\label{eq1.6a}
    x''(t)=\frac{b(t)}{x^3(t)},
\end{equation}
where $b(t)$ is a piecewise-constant sign-changing function. Motivated by the two works \cite{Bravo2010,Hakl2010}, the mathematicians paid their attention to the singular equations containing  both attractive and repulsive singularities simultaneously or indefinite singularity (see \cite{Cheng2021, Cheng2018, Godoy2019, Hakl2017, Lu2019-1, Lu2019-2, Lu2019, Urena2016, Yu2022}). Thereinto, Hakl and Zamora \cite{Hakl2017}, Godoy and Zamora \cite{Godoy2019} generalized equation \eqref{eq1.6a} in a wider application.  In the recent years, Chu et. al. \cite{ChuBLMS2007, Chu-JDE, Jiang2005}, Li and Zhang \cite{Li2008}, Torres \cite{TorresNA,TorresJDE} obtained some existence results for positive periodic solutions of the following equation by some fixed point theorems
\begin{equation}{\label{eq1.7aa}}
 x''(t)+a(t)x(t)=f(t,x)+e(t),
 \end{equation}
where $f(t,x)$ may be singular at $x=0$. Subsequently, Cheng and Cui \cite{Cheng2021} considered equation \eqref{eq1.7aa} with $f(t,x)$ an indefinite singularity, i.e.,
\begin{equation}{\label{eq1.8}}
 x''(t)+q(t)x(t)=\frac{b(t)}{x^\rho(t)}+e(t),
 \end{equation}
where $\rho>0$ is a constant and $b(t)\in C(\mathbb{R},\mathbb{R})$ is a periodic function.%
 Lu et. al. \cite{Lu2019-1, Lu2019-2, Lu2019} by means of coincidence degree theory generalized indefinite singularity to Li\'enard equation.

In this paper, we study the following differential equation with indefinite and repulsive singularities simultaneously:
\begin{equation}\label{eq1.1}
    x''(t)+p(t)x'(t)+q(t)x(t)=\frac{b(t)}{x^{\rho_1}(t)}+\frac{c(t)}{x^{\rho_2}(t)}+e(t),
\end{equation}
where $p(t),~q(t),~b(t)\in C(\mathbb{R},\mathbb{R})$ and $c(t),~e(t)\in C(\mathbb{R},(0,+\infty))$ are $\omega$-periodic functions, $\rho_1,~\rho_2$ are positive constants. The term $\frac{b(t)}{x^{\rho_1}(t)}$ has an indefinite singularity in view of the uncertainty of the sign of weight function $b(t)$, even the singularity will be disappeared if $b(t)=0$ in some subintervals. In addition to this, the equation has a repulsive singularity because of the term $\frac{c(t)}{x^{\rho_2}(t)}$ and $c(t)>0$. In particular, if equation \eqref{eq1.1} has no damping term (i.e., $p(t)\equiv0$), it reduces to the following differential equation:
\begin{equation}\label{eq1.3}
    x''(t)+q(t)x(t)=\frac{b(t)}{x^{\rho_1}(t)}+\frac{c(t)}{x^{\rho_2}(t)}+e(t).
\end{equation}
We generalize and improve the previous works in three aspects. {\em Firstly}, equation \eqref{eq1.1} is investigated in a very general form containing both indefinite singularity and repulsive singularity simultaneously. Obviously, it is   more general and complex than equations \eqref{eq1.6aa}-\eqref{eq1.8}. In fact, equations \eqref{eq1.6aa}-\eqref{eq1.8} either do not contain indefinite singularity or single singularity. This means that the previous methods 
can not be directly applied to equation \eqref{eq1.1} any more. {\em Secondly}, noticing the term $\frac{b_1(t)}{x^{\nu_1}}-\frac{c_1(t)}{x^{\nu_2}}$, Chu et al. \cite{Chu2012}, Cheng and Ren \cite{Cheng2018}, Hakl and Torres \cite{Hakl2010} required $\nu_1>\nu_2$. In fact, $\nu_1>\nu_2$ implies that equation \eqref{eq1.6} tends to be more repulsive singularity. In this paper, we consider all the three cases, $\rho_1>\rho_2$, $\rho_1=\rho_2$ and $\rho_1<\rho_2$. Consequently, our results generalize and improve  the existing literatures.
{\em Thirdly}, our results can be applied to both strong singularity and weak singularity.
In fact,  following Lazer and Solimini's work, there are many good papers appearing in this field by different kinds of methods  such as  the fixed point theorems \cite{Cheng2021, CX2021, ChuD2}, lower and upper solution \cite{Amster2007, Hakl2010, Hakl2011}, Leray-Schauder alternative principle \cite{Chu2012, Li2017, Jiang2005} and coincidence degree theory \cite{Boscaggin2020, Lu2019, Kong2017}, and so on. Different from above methods, in this paper, we use the theory of the fixed point in cones to study the periodic solution.

 The rest structure of this paper is as follows. In section 2, the sufficient conditions for the positivity of Green's function of the second-order differential equation are given.~
 Our main results are stated in Section 3. 
 In section 4, three examples and their simulations are given to confirm our conclusions.

\section{\Large{Positivity of Green's function}}

\par\noindent

In order to make good use of fixed point theorem to get the existence of positive periodic solution for equation \eqref{eq1.2}, first of all we need to guarantee the invariance of the sign of Green's function of the nonhomogeneous linear equation corresponding to equation \eqref{eq1.2}. According to the specific situation of this paper, we consider the positivity of Green's function.

Let $C_\omega^1:=\{\phi(t),~\phi'(t)\in C(\mathbb{R},\mathbb{R}):~\phi(t+\omega)=\phi(t),~\phi'(t+\omega)=\phi'(t),~t\in\mathbb{R}\}$, equipped with the norm $\|\phi\|=\|\phi\|_{\infty}+\|\phi'\|_{\infty}$, and $C_\omega^1$ in the norm $\|\cdot\|$ sense is a Banach space, here $\|\phi\|_{\infty}=\max\limits_{t\in[0,\omega]}|\phi(t)|$. For a continuous function $\phi(t)$, we define $\phi^*:=\max\limits_{0\leq t\leq\omega}\phi(t)$ and $\phi_*:=\min\limits_{0\leq t\leq\omega}\phi(t)$.

Considering the following second-order nonhomogeneous linear differential equation,
\begin{equation}\label{eq2.1}
\begin{cases}
u''(t)+p(t)u'(t)+l(t)u(t)=h(t),\\
u(0)=u(\omega),~u'(0)=u'(\omega),
\end{cases}
\end{equation}
where $l(t)=\frac{q(t)}{\alpha}$, $h(t)\in C(\mathbb{R},(0,+\infty))$ is a continuous $\omega$-periodic function. The unique $\omega$-periodic solution of equation \eqref{eq2.1} is expressed as
$$u(t)=\int^\omega_0 G(t,s)h(s)ds,$$
where $G(t,s)$ is Green's function of equation \eqref{eq2.1}. Now we talk about sufficient conditions to make $G(t,s)>0$ such that $\omega$-periodic solution of equation \eqref{eq2.1} is positive. To describe the problem conveniently, we assume the following condition holds:

$(A)$ The Green's function $G(t,s)>0$ for all $(t,s)\in[0,\omega]\times[0,\omega]$.

In 2005, Wang et al.\cite[Lemma 2.4]{Wang2007} proved condition $(A)$ can be satisfied under the following two conditions:

$(A_1)$ There are continuous $\omega$-periodic functions $a_1(t)$ and
$a_2(t)$ such that $\int^\omega_0a_1(t)dt>0,~~\int^\omega_0a_2(t)dt>0$
and
$$a_1(t)+a_2(t)=p(t),~~~~a_1'(t)+a_1(t)a_2(t)=l(t),~~~~\mbox{for}~~t\in\mathbb{R}.$$

$(A_2)$ $\left(\int^\omega_0p(t)dt\right)^2\geq
4\omega^2\exp\left(\frac{1}{\omega}\int^\omega_0 \ln
l(s)ds\right).$\\

On the basis of Wang's work, Cheng and Ren gave the following conclusion:

\begin{Lemma} {\label{Theorem 2.1}}\cite[Lemma 2.2]{Cheng2018} Assume that condition $(A_1)$ holds, then condition $(A)$ holds.
\end{Lemma}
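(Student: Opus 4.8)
The plan is to reduce the claim to a factorization of the linear operator $L u := u'' + p(t) u' + l(t) u$ into two first-order operators and then to show that each first-order periodic operator has a strictly positive Green's function, so that their composition does too. First I would use condition $(A_1)$: it guarantees continuous $\omega$-periodic functions $a_1, a_2$ with $a_1 + a_2 = p$ and $a_1' + a_1 a_2 = l$. The point of this algebraic identity is that it is exactly the condition under which $Lu$ factors as
\begin{equation}\label{eq:factor}
Lu = \left(\frac{d}{dt} + a_1(t)\right)\left(\frac{d}{dt} + a_2(t)\right) u,
\end{equation}
which one verifies by expanding the right-hand side and matching coefficients against $p$ and $l$. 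Thus solving $Lu = h$ with periodic boundary conditions is the same as solving $v' + a_1 v = h$ and then $u' + a_2 u = v$, each subject to periodicity.

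Next I would write down the Green's function of a scalar periodic problem $w' + a(t) w = g(t)$, $w(0) = w(\omega)$, when $\int_0^\omega a(t)\, dt > 0$. The standard variation-of-constants formula gives a unique $\omega$-periodic solution
\[
w(t) = \int_0^\omega G_a(t,s)\, g(s)\, ds, \qquad G_a(t,s) = \frac{\exp\!\left(\int_s^t a(\tau)\, d\tau\right)}{\exp\!\left(\int_0^\omega a(\tau)\, d\tau\right) - 1}
\]
for $s \le t$ and with the periodic continuation (an extra factor $\exp(\int_0^\omega a)$ in the numerator) for $s > t$; the hypothesis $\int_0^\omega a > 0$ makes the denominator positive and every exponential is positive, so $G_a(t,s) > 0$ on $[0,\omega]\times[0,\omega]$. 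Applying this to $a = a_1$ and to $a = a_2$ — both of which have strictly positive mean by $(A_1)$ — I get two strictly positive kernels $G_{a_1}$ and $G_{a_2}$.

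Finally I would compose: since $u(t) = \int_0^\omega G_{a_2}(t,\sigma) v(\sigma)\, d\sigma$ and $v(\sigma) = \int_0^\omega G_{a_1}(\sigma,s) h(s)\, ds$, we get
\[
u(t) = \int_0^\omega \left( \int_0^\omega G_{a_2}(t,\sigma)\, G_{a_1}(\sigma,s)\, d\sigma \right) h(s)\, ds,
\]
so $G(t,s) = \int_0^\omega G_{a_2}(t,\sigma) G_{a_1}(\sigma, s)\, d\sigma$ is an integral of a product of strictly positive continuous functions over a set of positive measure, hence strictly positive; this is exactly condition $(A)$. I should also note that uniqueness of the $\omega$-periodic solution of \eqref{eq2.1} is automatic here, since each first-order periodic problem is uniquely solvable when the mean of the coefficient is nonzero, and this justifies speaking of "the" Green's function $G(t,s)$.

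The main obstacle, and the only genuinely delicate point, is verifying the factorization \eqref{eq:factor} with the correct ordering of the two first-order factors — the cross term in expanding $(\partial_t + a_1)(\partial_t + a_2)u$ produces $a_1' u + (a_1 + a_2) u' + a_1 a_2 u$ plus $u''$, and one must check that $(A_1)$'s identities $a_1 + a_2 = p$, $a_1' + a_1 a_2 = l$ match this exactly (they do, but the asymmetry between $a_1$ and $a_2$ means the other ordering would require $a_2' + a_1 a_2 = l$ instead). Everything after that is routine: positivity of each scalar periodic Green's function is a direct computation, and positivity is preserved under composition.
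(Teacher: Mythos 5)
Your overall strategy is the right one, and in fact it is essentially the argument behind the cited result: this paper gives no proof of the lemma (it quotes Cheng--Ren, Lemma 2.2, which rests on Wang--Lian--Ge \cite{Wang2007}), and that argument is precisely the factorization of $Lu=u''+pu'+lu$ into two first-order periodic operators with positive kernels, composed to give $G>0$. However, the one step you yourself single out as delicate is carried out incorrectly. Expanding your chosen ordering gives
\[
\left(\tfrac{d}{dt}+a_1\right)\left(\tfrac{d}{dt}+a_2\right)u
= u''+(a_1+a_2)u'+\bigl(a_2'+a_1a_2\bigr)u ,
\]
i.e.\ the cross term contains $a_2'u$, not $a_1'u$ as you wrote. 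Since $(A_1)$ asserts $a_1'+a_1a_2=l$, your factorization \eqref{eq:factor} does \emph{not} match $(A_1)$; the correct factorization is the other ordering, $Lu=\left(\tfrac{d}{dt}+a_2\right)\left(\tfrac{d}{dt}+a_1\right)u$. As written, the claim ``they do [match]'' is false, and everything downstream is built on the wrong identity, so this is a genuine error in the central verification. The repair is immediate: swap the factors, solve $v'+a_2v=h$ first and then $u'+a_1u=v$, and note that $(A_1)$ gives both $\int_0^\omega a_1>0$ and $\int_0^\omega a_2>0$, so both scalar kernels are still strictly positive and the composed kernel $G(t,s)=\int_0^\omega G_{a_1}(t,\sigma)G_{a_2}(\sigma,s)\,d\sigma$ is strictly positive, which is condition $(A)$; your uniqueness remark also survives the swap.

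Two smaller points. First, your formula for the first-order periodic Green's function has a sign slip: for $w'+aw=g$, $w(0)=w(\omega)$, the kernel for $s\le t$ is $G_a(t,s)=\dfrac{\exp\left(-\int_s^t a(\tau)\,d\tau\right)}{1-\exp\left(-\int_0^\omega a(\tau)\,d\tau\right)}$ (with the analogous expression for $s>t$), not $\exp\left(+\int_s^t a\right)$ over $\exp\left(\int_0^\omega a\right)-1$; this does not affect positivity, since under $\int_0^\omega a>0$ numerator and denominator are still positive, but it should be corrected if the formula is to be quoted. Second, it would be worth stating explicitly that strict positivity of the composed kernel uses continuity and strict positivity of $G_{a_1},G_{a_2}$ on all of $[0,\omega]\times[0,\omega]$, which you have; with the ordering fixed, the proof is complete and coincides in substance with the argument in the cited literature.
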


In 2012, Chu et al. \cite{Chu2012} also proved condition $(A)$ by using antimaximum principle. First, authors defined two functions
$$\varsigma(p)(t)=exp\left(\int^t_0p(s)ds\right)$$and$$\varsigma_1(p)(t)=\varsigma(p)(\omega)\int^t_0p(s)ds+\int^\omega_t\varsigma(p)(s)ds,$$
and then gave the following theorem:

\begin{Lemma}{\label{Theorem 2.2}}\cite[Corollary 2.6]{Chu2012} Assume that
$$\int^\omega_0l(s)\varsigma(p)(s)\varsigma_1(-p)(s)ds\geq0$$
and
$$\sup\limits_{0\leq t\leq\omega}\left\{\int^{t+\omega}_t\varsigma(-p)(s)ds\int^{t+\omega}_t\max\{l(s),0\}\varsigma(p)(s)ds\right\}\leq4$$
hold. If $l(t)\not\equiv0$,
then condition $(A)$ holds.
\end{Lemma}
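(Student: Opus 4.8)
The plan is to read the positivity of $G(t,s)$ as an \emph{antimaximum principle}: $G(t,s)>0$ on $[0,\omega]^2$ precisely when every continuous $\omega$-periodic $h\geq 0$ with $h\not\equiv 0$ forces the unique solution $u(t)=\int_0^\omega G(t,s)h(s)\,ds$ of \eqref{eq2.1} to be strictly positive. I would therefore aim to prove directly that, under the two hypotheses, $h\geq0$ and $h\not\equiv0$ imply $u>0$, and then recover pointwise positivity of the kernel itself, since $G(t,s)>0$ is equivalent to this principle by a standard approximation of the datum.

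First I would strip the damping term. Since $\varsigma(p)'=p\,\varsigma(p)$, multiplying \eqref{eq2.1} by the integrating factor $\varsigma(p)(t)$ turns it into the self-adjoint (Sturm--Liouville) form
\[
\bigl(\varsigma(p)(t)\,u'(t)\bigr)'+\varsigma(p)(t)\,l(t)\,u(t)=\varsigma(p)(t)\,h(t).
\]
Next I would remove the weight on the leading term by the change of independent variable $s=\int_0^t \varsigma(-p)(\tau)\,d\tau$ (note $\varsigma(-p)=1/\varsigma(p)$), which carries the equation into an undamped Hill-type problem $\frac{d^2u}{ds^2}+\widetilde{l}(s)\,u=\widetilde{h}(s)$ with $\widetilde{h}\geq 0$. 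The purpose of this normalisation is that the two hypotheses are exactly the images, under these transformations, of the classical criteria for the undamped periodic problem: the first integral inequality expresses that the transformed mean potential is nonnegative (non-resonance together with the correct sign), while the product condition $\sup_t\bigl\{\int_t^{t+\omega}\varsigma(-p)\cdot\int_t^{t+\omega}\max\{l,0\}\varsigma(p)\bigr\}\leq 4$ is precisely the weighted form of the Lyapunov inequality in the new length element.

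With the problem in undamped form, I would conclude in two moves. \emph{Disconjugacy}: if some nontrivial solution of the homogeneous equation had two zeros inside a period, the Lyapunov inequality would force $\int \varsigma(-p)\cdot\int \max\{l,0\}\varsigma(p)>4$ on the corresponding subinterval, so the product condition $\leq 4$ rules out two zeros in any interval of one period, whence the homogeneous equation is disconjugate and $G$ is of one sign. \emph{Sign determination}: the first hypothesis makes the weighted mean of $l$ nonnegative, and strictly effective because $l\not\equiv 0$; this fixes that one sign to be positive and simultaneously excludes the resonant case $u\equiv\text{const}$. Combining the two gives $u>0$ for one-signed data, hence $G>0$.

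The main obstacle I anticipate is bookkeeping the \emph{non-periodicity} of $\varsigma(\pm p)$: because $\int_0^\omega p$ need not vanish, one has $\varsigma(p)(t+\omega)=\varsigma(p)(\omega)\,\varsigma(p)(t)$, so neither the integrating factor nor the new time variable is genuinely $\omega$-periodic. Matching the periodic boundary conditions across a period therefore generates correction terms built from $\varsigma(p)(\omega)$, and the auxiliary function $\varsigma_1$ is exactly the device that absorbs these corrections; verifying that the hypotheses survive both changes of variable in the correct weighted form is the delicate step. The secondary technical point is pinning the sharp constant $4$ in the weighted Lyapunov inequality, which comes from optimising the position of the two hypothetical zeros and is responsible for the precise threshold in the statement.
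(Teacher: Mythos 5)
First, a point of comparison: the paper does not prove this lemma at all --- it is quoted, with hypotheses and all, from \cite[Corollary 2.6]{Chu2012}, so your proposal has to be judged against the argument in that cited source rather than against anything in the present paper. Your sketch does capture the correct skeleton of such proofs: pass to the self-adjoint form $\bigl(\varsigma(p)u'\bigr)'+l(t)\varsigma(p)u=\varsigma(p)h$, exclude two zeros of nontrivial homogeneous solutions in intervals of length $\omega$ via a weighted Lyapunov inequality with the sharp constant $4$, and use the remaining integral hypothesis to fix nonresonance and the sign. Indeed, the Lyapunov step can be run \emph{directly} in the self-adjoint form: if $u(a)=u(b)=0$, $u>0$ on $(a,b)$, then $\int_a^b\varsigma(p)(u')^2=\int_a^b l\,\varsigma(p)u^2$, and Cauchy--Schwarz on $[a,t_M]$ and $[t_M,b]$ plus the inequality $1/A+1/B\geq 4/(A+B)$ give $\int_a^b\varsigma(-p)\,ds\cdot\int_a^b\max\{l,0\}\varsigma(p)\,ds\geq 4$, which the second hypothesis forbids (up to the boundary case) on any interval of length at most $\omega$; note also that the base point of the integrating factor cancels in this product, which is why the hypothesis is stated over windows $[t,t+\omega]$.

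The genuine gap is your second reduction, and it is not bookkeeping. The time change $\tau(t)=\int_0^t\varsigma(-p)(s)\,ds$ does \emph{not} produce a periodic Hill problem when $\int_0^\omega p\,dt\neq0$: since $\varsigma(-p)(t+\omega)=\varsigma(-p)(\omega)\varsigma(-p)(t)$, one computes $\tau(t+\omega)=T+\varsigma(-p)(\omega)\,\tau(t)$ with $T=\int_0^\omega\varsigma(-p)$, so the shift $t\mapsto t+\omega$ becomes an affine \emph{dilation} in $\tau$, not a translation; correspondingly the transformed potential $\varsigma(p)^2l$ is not $T$-periodic and the boundary conditions become $u(0)=u(T)$, $\dot u(0)=\varsigma(-p)(\omega)\dot u(T)$. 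Consequently ``the classical criteria for the undamped periodic problem'' that your argument leans on are simply not available for the transformed problem; the function $\varsigma_1$ cannot ``absorb'' this, because the obstruction is structural (loss of periodicity), and in fact $\varsigma_1$ enters the cited proof precisely in the step your sketch never carries out: showing that nonresonance plus disconjugacy forces the periodic Green's function of the \emph{damped} operator to have one sign, and that the first integral hypothesis (together with $l\not\equiv0$) makes that sign positive and rules out resonance. Both of these claims appear in your proposal as single asserted sentences (``this fixes that one sign to be positive and simultaneously excludes the resonant case''), but they are exactly where the work lies; the Floquet multiplier $\varsigma(p)(\omega)\neq1$ has to be carried through this part of the argument explicitly, as is done in \cite{Chu2012}.
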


In particular, as the damping term of equation \eqref{eq2.1} is absent (i.e., $p(t)\equiv0$), a specific value about Green's function can be obtained, and condition $(A)$ is ensured.

\begin{Lemma}{\label{Theorem 2.3}}\cite[Lemma 2.5]{Han2009} In the case $l(t)=\xi^2$ with $\xi>0$, the expression of the Green's function as follows,
\begin{equation}\label{eq2.2}
G(t,s)=\left\{
\begin{aligned}
&\frac{\cos\xi(t-s-\frac{\omega}{2})}{2\xi\sin\frac{\xi\omega}{2}},~~~~ 0\leq s\leq t\leq\omega,\\
&\frac{\cos\xi(t-s+\frac{\omega}{2})}{2\xi\sin\frac{\xi\omega}{2}},~~~~
0\leq t<s\leq\omega.
 \end{aligned}
 \right.
\end{equation}
If
\begin{equation}\label{eq2.3}
\xi<\frac{\pi}{\omega},
\end{equation}
then condition $(A)$ holds. 
\end{Lemma}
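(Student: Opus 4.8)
The plan is to proceed in two stages: first confirm that the stated kernel \eqref{eq2.2} really is the Green's function of \eqref{eq2.1} in the special case $p(t)\equiv0$, $l(t)=\xi^2$, and then read off its sign directly from the closed form.

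For the first stage, observe that the homogeneous problem $u''+\xi^2u=0$ with periodic boundary data admits only the trivial $\omega$-periodic solution exactly when $\sin(\xi\omega/2)\neq0$; under \eqref{eq2.3} one has $\xi\omega/2\in(0,\pi/2)$, so this is the case and the Green's function is well defined. Writing $G(t,s)$ for the right-hand side of \eqref{eq2.2}, I would check the four properties that characterise it: (i) for fixed $s$, $\partial_{tt}G+\xi^2G=0$ on $[0,s)$ and on $(s,\omega]$, which is immediate since each branch has the form $\cos\xi(t-s\mp\tfrac\omega2)$; (ii) continuity across $t=s$, both one-sided limits equalling $\cos(\xi\omega/2)/(2\xi\sin(\xi\omega/2))$; (iii) the unit jump $\partial_tG(s^+,s)-\partial_tG(s^-,s)=\tfrac12-(-\tfrac12)=1$; and (iv) the periodic conditions $G(0,s)=G(\omega,s)$ and $\partial_tG(0,s)=\partial_tG(\omega,s)$, both sides reducing to $\cos\xi(\tfrac\omega2-s)/(2\xi\sin(\xi\omega/2))$ and its $t$-derivative. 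Properties (i)--(iv) identify $G$ as the Green's function, so $u(t)=\int_0^\omega G(t,s)h(s)\,ds$. (Equivalently, one can derive \eqref{eq2.2} from scratch by writing $G=A(s)\cos\xi t+B(s)\sin\xi t$ on each subinterval and solving the linear system produced by (ii)--(iv); I would include this only if a fully self-contained derivation were wanted.)

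For the second stage, assume \eqref{eq2.3}. Then $\xi\omega/2\in(0,\pi/2)$, so $\sin(\xi\omega/2)>0$ and the common denominator $2\xi\sin(\xi\omega/2)$ is strictly positive; it thus suffices to show each numerator is positive. When $0\le s\le t\le\omega$ we have $t-s-\tfrac\omega2\in[-\tfrac\omega2,\tfrac\omega2]$, hence $\xi(t-s-\tfrac\omega2)\in[-\tfrac{\xi\omega}2,\tfrac{\xi\omega}2]\subset(-\tfrac\pi2,\tfrac\pi2)$ and $\cos\xi(t-s-\tfrac\omega2)>0$; when $0\le t<s\le\omega$ the same estimate applied to $t-s+\tfrac\omega2\in[-\tfrac\omega2,\tfrac\omega2]$ gives $\cos\xi(t-s+\tfrac\omega2)>0$. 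Therefore $G(t,s)>0$ on $[0,\omega]\times[0,\omega]$, i.e.\ condition $(A)$ holds.

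The argument is elementary; the only delicate point is the bookkeeping in stage one --- getting the size and sign of the derivative jump right and making sure the branch labels $0\le s\le t\le\omega$ versus $0\le t<s\le\omega$ are consistent with the continuity and periodicity checks. Stage two is then a single interval estimate, and the computation makes clear that $\xi<\pi/\omega$ is sharp: at $\xi=\pi/\omega$ one has $\xi\omega/2=\pi/2$, so $G(\omega,0)=\cos(\xi\omega/2)/(2\xi\sin(\xi\omega/2))=0$ and positivity is lost.
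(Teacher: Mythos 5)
Your proposal is correct. Note that the paper itself offers no proof of this lemma: it is quoted verbatim from \cite[Lemma 2.5]{Han2009}, so there is no internal argument to compare against. Your two-stage verification is the standard one and all the delicate points check out: the one-sided limits at $t=s$ both equal $\cos(\xi\omega/2)/(2\xi\sin\frac{\xi\omega}{2})$, the derivative jump is $\tfrac12-(-\tfrac12)=1$ as required for the kernel of $u''+\xi^2u=h$, the periodicity identities reduce to $\cos\xi(\tfrac\omega2-s)$ on both sides, and under \eqref{eq2.3} the arguments of the cosines lie in $[-\tfrac{\xi\omega}{2},\tfrac{\xi\omega}{2}]\subset(-\tfrac\pi2,\tfrac\pi2)$ while the denominator is positive, giving $G(t,s)>0$; your closing observation that positivity fails at $\xi=\pi/\omega$ (since $G(\omega,0)=0$ there) correctly identifies the sharpness of \eqref{eq2.3}.
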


Under the circumstances that condition $(A)$ holds, one can say $$G^*:=\max\limits_{0\leq s,t\leq\omega}G(t,s)>G_*:=\min\limits_{0\leq s,t\leq\omega}G(t,s)>0,$$
and we define $$\sigma:=\frac{G_*}{G^*+\max\limits_{0\leq
s,t\leq\omega}\left|\frac{\partial G(t,s)}{\partial t}\right|},~~\delta:=\frac{\max\limits_{0\leq
s,t\leq\omega}\left|\frac{\partial G(t,s)}{\partial t}\right|}{G_*}.$$

\section{\Large{Existence of positive periodic solution for equation \eqref{eq1.1}}}

\par\noindent

Our main results and their proofs are presented in this section. %
To study eqution \eqref{eq1.1}, we make a transformation of variable $x=y^\alpha$ and $\alpha=\frac{1}{\rho_1+1}$. Then equation \eqref{eq1.1} is rewritten as follows:
\begin{equation}\label{eq1.2}
    y''(t)+p(t)y'(t)+\frac{q(t)}{\alpha}y(t)=\frac{c(t)}{\alpha}y^{1-\alpha-\alpha\rho_2}(t)+\frac{e(t)}{\alpha}y^{1-\alpha}(t)+(1-\alpha)\frac{|y'(t)|^2}{y(t)}+\frac{b(t)}{\alpha}.
\end{equation}
Obviously, by this transformation, it no longer contains indefinite singularity. We see that the term $(1-\alpha)\frac{|y'(t)|^2}{y(t)}$ has a repulsive singularity. Now let's consider the term $\frac{c(t)}{\alpha}y^{1-\alpha-\alpha\rho_2}(t)$:

$(i)$ If $\rho_1>\rho_2$ (i.e. $0<1-\alpha-\alpha\rho_2<1$), the term has no singularity, in this case, equation \eqref{eq1.2} has just a repulsive singularity;

$(ii)$ if $\rho_1<\rho_2$ (i.e. $1-\alpha-\alpha\rho_2<0$), the term has a repulsive singularity, in this case, equation \eqref{eq1.2} has two repulsive singularities;

$(iii)$ if $\rho_1=\rho_2$ (i.e. $1-\alpha-\alpha\rho_2=0$), same situation as $(i)$.

Consequently, the existence of positive periodic solution for equation \eqref{eq1.1} is equivalent to that of equation \eqref{eq1.2}.  To prove the existence of positive periodic solutions, we need a fixed
point theorem in cones (see \cite{Regan1997}).

\begin{Lemma}{\label{Lemma 3.1}} Let $X$ be a Banach space and $K$ is a
cone in $X$. Assume that $\Lambda_1,~\Lambda_2$ are open subsets of
$X$ with $0\in\Lambda_1,~\overline{\Lambda}_1\subset\Lambda_2$. Let
$$\mathcal{T}: K\cap(\overline{\Lambda}_2\backslash\Lambda_1)\rightarrow K$$
be a continuous and completely continuous operator such that

$(i)$ $\|\mathcal{T}y\|\leq\|y\|$ for $y\in K\cap\partial\Lambda_2$;

$(ii)$ there exists $y_0\in K\backslash\{0\}$ such that $y\neq
\mathcal{T}y+\lambda y_0$ for $y\in K\cap\partial\Lambda_1$ and $\lambda>0$.\\
Then $\mathcal{T}$ has a fixed point in
$K\cap(\overline{\Lambda}_2\backslash\Lambda_1)$.
\end{Lemma}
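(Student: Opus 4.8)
The plan is to derive the conclusion from the theory of the fixed point index on a cone, of which this lemma is the standard ``compression'' instance. Recall that for a bounded open set $\Omega\subset X$ and a completely continuous map $A\colon K\cap\overline{\Omega}\to K$ with no fixed point on $K\cap\partial\Omega$, the index $i(A,K\cap\Omega,K)\in\mathbb{Z}$ is defined and satisfies: \emph{(existence)} if $i(A,K\cap\Omega,K)\neq0$ then $A$ has a fixed point in $K\cap\Omega$; \emph{(homotopy invariance)} the index is invariant along admissible completely continuous homotopies $H\colon[0,1]\times(K\cap\overline{\Omega})\to K$ with $H(t,y)\neq y$ on $[0,1]\times(K\cap\partial\Omega)$; \emph{(normalization)} a constant map onto a point of $K\cap\Omega$ has index $1$, and a map with no fixed point in $K\cap\overline{\Omega}$ has index $0$; \emph{(additivity)} if $\Omega_1,\Omega_2$ are disjoint open subsets of $\Omega$ and $A$ is fixed-point free on $K\cap\big(\overline{\Omega}\setminus(\Omega_1\cup\Omega_2)\big)$, then $i(A,K\cap\Omega,K)=i(A,K\cap\Omega_1,K)+i(A,K\cap\Omega_2,K)$. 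Since $\overline{\Lambda}_1\subset\Lambda_2$, both $\partial\Lambda_1$ and $\partial\Lambda_2$ lie inside $\overline{\Lambda}_2\setminus\Lambda_1$, so $\mathcal{T}$ is defined there; if $\mathcal{T}$ has a fixed point on $K\cap\partial\Lambda_1$ or on $K\cap\partial\Lambda_2$ we are already done, and I assume henceforth that it does not. (If $\Lambda_1,\Lambda_2$ are not bounded one first intersects the ambient space with a ball absorbing the relatively compact set $\overline{\mathcal{T}\big(K\cap(\overline{\Lambda}_2\setminus\Lambda_1)\big)}$; for the intended application to \eqref{eq1.2} the sets $\Lambda_i$ will simply be balls.)

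\emph{Step 1: $i(\mathcal{T},K\cap\Lambda_2,K)=1$.} Consider the homotopy $H(t,y)=t\,\mathcal{T}y$, $t\in[0,1]$, which maps $K\cap\overline{\Lambda}_2$ into $K$ and is completely continuous. If $H(t,y)=y$ for some $y\in K\cap\partial\Lambda_2$, then $t\neq0$ because $0\in\Lambda_1\subset\Lambda_2$ is interior, hence $0\notin\partial\Lambda_2$; moreover $\mathcal{T}y=t^{-1}y$ with $t^{-1}\geq1$. The case $t=1$ is an excluded boundary fixed point, while $t^{-1}>1$ gives $\|\mathcal{T}y\|=t^{-1}\|y\|>\|y\|$ (note $y\neq0$), contradicting $(i)$. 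Hence the homotopy is admissible, and by homotopy invariance and normalization $i(\mathcal{T},K\cap\Lambda_2,K)=i(H(0,\cdot),K\cap\Lambda_2,K)=i(0,K\cap\Lambda_2,K)=1$.

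\emph{Step 2: $i(\mathcal{T},K\cap\Lambda_1,K)=0$.} Take $y_0\in K\setminus\{0\}$ from $(ii)$ and consider $H(\lambda,y)=\mathcal{T}y+\lambda y_0$ for $\lambda\in[0,T]$; since $K$ is a cone this maps $K\cap\overline{\Lambda}_1$ into $K$ and is completely continuous. On $K\cap\partial\Lambda_1$ we have $H(\lambda,y)\neq y$: for $\lambda=0$ by the assumed absence of boundary fixed points, and for $\lambda\in(0,T]$ this is exactly $(ii)$. Hence $i(\mathcal{T},K\cap\Lambda_1,K)=i(\mathcal{T}(\cdot)+Ty_0,K\cap\Lambda_1,K)$. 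Because $\mathcal{T}(K\cap\overline{\Lambda}_1)$ is bounded and $\|y_0\|>0$, choose $T$ so large that $T\|y_0\|>\sup_{y\in K\cap\overline{\Lambda}_1}\|\mathcal{T}y\|+\sup_{y\in K\cap\overline{\Lambda}_1}\|y\|$; then for every $y\in K\cap\overline{\Lambda}_1$ one has $\|\mathcal{T}y+Ty_0\|\geq T\|y_0\|-\|\mathcal{T}y\|>\|y\|$, so $\mathcal{T}(\cdot)+Ty_0$ is fixed-point free on $K\cap\overline{\Lambda}_1$ and its index over $K\cap\Lambda_1$ vanishes. Thus $i(\mathcal{T},K\cap\Lambda_1,K)=0$.

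\emph{Step 3: conclusion, and the main difficulty.} Apply additivity with $\Omega=\Lambda_2$, $\Omega_1=\Lambda_1$, $\Omega_2=\Lambda_2\setminus\overline{\Lambda}_1$: these are disjoint open subsets of $\Lambda_2$, and $\overline{\Lambda}_2\setminus\big(\Lambda_1\cup(\Lambda_2\setminus\overline{\Lambda}_1)\big)=\partial\Lambda_1\cup\partial\Lambda_2$, on which $\mathcal{T}$ has no fixed points by our reduction. Therefore
\[
i\big(\mathcal{T},K\cap(\Lambda_2\setminus\overline{\Lambda}_1),K\big)=i(\mathcal{T},K\cap\Lambda_2,K)-i(\mathcal{T},K\cap\Lambda_1,K)=1-0=1\neq0,
\]
so by the existence property $\mathcal{T}$ has a fixed point in $K\cap(\Lambda_2\setminus\overline{\Lambda}_1)\subset K\cap(\overline{\Lambda}_2\setminus\Lambda_1)$. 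The only genuinely hard ingredient is the foundational one: having at hand the fixed point index on cones with the four properties above — a construction obtained from the Leray--Schauder degree via a retraction of $X$ onto $K$ — together with the minor bookkeeping forced by the fact that $\Lambda_1,\Lambda_2$ are assumed merely open rather than bounded. Once that machinery is granted, Steps 1--3 are the routine ``$1-0$'' compression argument.
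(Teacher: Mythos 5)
The paper does not prove this lemma at all: it is quoted as a known fixed point theorem in cones, with a citation to O'Regan's book \cite{Regan1997}, so there is no in-paper argument to compare against. Your index-theoretic ``$1-0$'' compression argument is indeed the standard route to this result, and Steps 1--3 are organized correctly (admissibility of $H(t,y)=t\mathcal{T}y$ on $K\cap\partial\Lambda_2$ via $(i)$, index $0$ on $\Lambda_1$ via $(ii)$ and the large translate $\mathcal{T}(\cdot)+Ty_0$, then additivity).

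There is, however, one genuine gap as written: you compute $i(\mathcal{T},K\cap\Lambda_2,K)$ and $i(\mathcal{T},K\cap\Lambda_1,K)$, and in Step 2 you even use boundedness of $\mathcal{T}(K\cap\overline{\Lambda}_1)$, but $\mathcal{T}$ is only given on $K\cap(\overline{\Lambda}_2\setminus\Lambda_1)$; it is not defined anywhere in $K\cap\Lambda_1$, so neither of these indices (nor the homotopy $t\mathcal{T}y$ on all of $K\cap\overline{\Lambda}_2$) makes sense yet. The standard repair is to invoke the Dugundji extension theorem: since $K\cap(\overline{\Lambda}_2\setminus\Lambda_1)$ is closed in $K\cap\overline{\Lambda}_2$ and $K$ is closed and convex, $\mathcal{T}$ extends to a continuous map $\widetilde{\mathcal{T}}\colon K\cap\overline{\Lambda}_2\to \overline{\mathrm{co}}\,\mathcal{T}\big(K\cap(\overline{\Lambda}_2\setminus\Lambda_1)\big)\subset K$ which is again completely continuous; one then runs your three steps for $\widetilde{\mathcal{T}}$, observing that $\widetilde{\mathcal{T}}=\mathcal{T}$ on $\partial\Lambda_1\cup\partial\Lambda_2$ and on the open annulus $\Lambda_2\setminus\overline{\Lambda}_1$, so the fixed point produced there is a fixed point of $\mathcal{T}$. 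With that insertion the proof is complete; a secondary, minor point is that the index machinery needs $\Lambda_1,\Lambda_2$ bounded (or the image relatively compact), and your parenthetical ``intersect with a large ball'' remark should be justified more carefully than by absorbing the image, since ``completely continuous'' only controls images of bounded sets --- though in the paper's application the $\Lambda_i$ are balls, so this costs nothing there.
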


All the discussion throughout this paper takes place in Banach space $C^1_\omega$, define a cone $K$ in space $C^1_\omega$ as follows:
$$K=\{y\in C^1_\omega:~\min y(t)\geq\sigma\|y\|,~|y'(t)|\leq\delta y(t),~0\leq t\leq\omega\}.$$

In what follows, we discuss the periodic solutions with three cases: $\rho_1>\rho_2$, $\rho_1<\rho_2$ and $\rho_1=\rho_2$.

\begin{Theorem} {\label{Theorem 3.1}} Assume that condition $(A)$ and $\rho_1>\rho_2$ hold.
Furthermore, suppose there exist $R>r>0$ such that:

$(H_1)$ $\frac{1}{\sigma}\left(-\frac{b_*}{e_*}\right)^{\frac{1}{1-\alpha}}\leq r\leq\left(\frac{G_*c_*\omega\sigma^{1-\alpha-\alpha\rho_2}}{\alpha}\right)^{\frac{1}{\alpha+\alpha\rho_2}}$.

$(H_2)$ $\frac{(1-\alpha)(G^*+\delta G_*)\delta^2\omega}{\sigma}<1$.\\
Then there is at least one
positive $\omega$-periodic solution of equation \eqref{eq1.2}, saying $y(t)$ satisfying $r\leq\|y\|\leq R$. Consequently, it is a positive periodic solution of equation \eqref{eq1.1}.
\end{Theorem}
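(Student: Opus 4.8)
The plan is to recast the existence of a positive $\omega$-periodic solution of \eqref{eq1.2} as a fixed point problem in the cone $K$ and apply Lemma~\ref{Lemma 3.1}; a positive fixed point then yields, through $x=y^\alpha$, the desired solution of \eqref{eq1.1}. Since condition $(A)$ holds, the Green's function $G(t,s)$ of \eqref{eq2.1} with $l=q/\alpha$ is positive, so I would set $\Lambda_1=\{y\in C^1_\omega:\|y\|<r\}$, $\Lambda_2=\{y\in C^1_\omega:\|y\|<R\}$ and define
$$(\mathcal{T}y)(t)=\int_0^\omega G(t,s)f\big(s,y(s),y'(s)\big)\,ds,\qquad f(s,u,v)=\tfrac{c(s)}{\alpha}u^{1-\alpha-\alpha\rho_2}+\tfrac{e(s)}{\alpha}u^{1-\alpha}+(1-\alpha)\tfrac{v^2}{u}+\tfrac{b(s)}{\alpha}.$$
Fixed points of $\mathcal{T}$ in $C^1_\omega$ are precisely the $\omega$-periodic solutions of \eqref{eq1.2}, and a fixed point is of class $C^2$ since $f$ is continuous along it.

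\emph{Step 1: $\mathcal{T}$ maps $K\cap(\overline{\Lambda}_2\setminus\Lambda_1)$ into $K$ and is completely continuous.} For $y$ in this set one has $y(t)\geq\sigma\|y\|\geq\sigma r>0$, and the left inequality in $(H_1)$ gives $y^{1-\alpha}(t)\geq -b_*/e_*$, hence $\tfrac{e(s)}{\alpha}y^{1-\alpha}(s)+\tfrac{b(s)}{\alpha}\geq e_*y^{1-\alpha}(s)/\alpha\cdot\text{(sign bookkeeping)}\geq0$; combined with $c>0$ and $(1-\alpha)|y'|^2/y\geq0$ this makes the integrand nonnegative. Then $(\mathcal{T}y)(t)\geq G_*\int_0^\omega f\,ds$, $\|\mathcal{T}y\|_\infty\leq G^*\int_0^\omega f\,ds$ and $\|(\mathcal{T}y)'\|_\infty\leq\delta G_*\int_0^\omega f\,ds$, so $\|\mathcal{T}y\|\leq(G^*+\delta G_*)\int_0^\omega f\,ds$; invoking the definitions of $\sigma$ and $\delta$ gives $\min(\mathcal{T}y)(t)\geq\sigma\|\mathcal{T}y\|$ and $|(\mathcal{T}y)'(t)|\leq\delta(\mathcal{T}y)(t)$, i.e. $\mathcal{T}y\in K$. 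On the annular region $y$ is bounded and bounded away from $0$, so $f$ is continuous and bounded there; since $G$ and $\partial G/\partial t$ are continuous, the Arzel\`a--Ascoli theorem gives that $\mathcal{T}$ is continuous and completely continuous.

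\emph{Step 2: the two hypotheses of Lemma~\ref{Lemma 3.1}.} For $(i)$, let $y\in K$ with $\|y\|=R$; then $\sigma R\leq y(s)\leq R$ and $|y'(s)|\leq\delta R$, and since $\rho_1>\rho_2$ both exponents $1-\alpha$ and $1-\alpha-\alpha\rho_2$ lie in $(0,1)$, so
$$f\big(s,y(s),y'(s)\big)\leq\tfrac{c^*}{\alpha}R^{1-\alpha-\alpha\rho_2}+\tfrac{e^*}{\alpha}R^{1-\alpha}+\tfrac{(1-\alpha)\delta^2}{\sigma}R+\tfrac{b^*}{\alpha},$$
whence $\|\mathcal{T}y\|\leq(G^*+\delta G_*)\omega$ times this bound. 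Dividing by $R$, the only term not tending to $0$ as $R\to\infty$ is $\tfrac{(1-\alpha)(G^*+\delta G_*)\delta^2\omega}{\sigma}$, which is $<1$ by $(H_2)$; hence $\|\mathcal{T}y\|\leq R=\|y\|$ once $R$ is taken large enough, which is compatible with $R>r$ because $r$ is fixed only by $(H_1)$. For $(ii)$, take $y_0\equiv1\in K\setminus\{0\}$ and suppose $y=\mathcal{T}y+\lambda y_0$ for some $y\in K$ with $\|y\|=r$ and $\lambda>0$. Dropping the nonnegative terms and using $y(s)\geq\sigma r$ together with $1-\alpha-\alpha\rho_2>0$ gives $f\geq\tfrac{c_*}{\alpha}\sigma^{1-\alpha-\alpha\rho_2}r^{1-\alpha-\alpha\rho_2}$, so $(\mathcal{T}y)(t)\geq G_*\omega\tfrac{c_*}{\alpha}\sigma^{1-\alpha-\alpha\rho_2}r^{1-\alpha-\alpha\rho_2}\geq r$ by the right inequality in $(H_1)$; then $y(t)=(\mathcal{T}y)(t)+\lambda>r$ for all $t$, forcing $\|y\|>r$, a contradiction. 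Lemma~\ref{Lemma 3.1} then produces a fixed point $y\in K\cap(\overline{\Lambda}_2\setminus\Lambda_1)$ with $r\leq\|y\|\leq R$, which is positive since $y(t)\geq\sigma\|y\|\geq\sigma r>0$; finally $x=y^\alpha$ solves \eqref{eq1.1} and is positive $\omega$-periodic.

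I expect the main obstacle to be the estimate in part $(i)$: one must isolate the repulsive term $(1-\alpha)|y'|^2/y$ as the only genuinely linear-in-$R$ contribution to $f$ (which is exactly why the coefficient appearing in $(H_2)$ is the right one) and verify that all remaining contributions are strictly sublinear, a fact that hinges on $\rho_1>\rho_2$ so that $1-\alpha-\alpha\rho_2<1$. The second point requiring care is the sign of the indefinite weight $b(t)$ in the $K$-invariance step; the lower bound on $r$ in $(H_1)$ is precisely the condition needed to keep the integrand $f$ nonnegative.
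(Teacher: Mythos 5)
Your proposal is correct and follows essentially the same route as the paper: the same operator $\mathcal{T}$ and cone $K$, nonnegativity of the integrand from the left half of $(H_1)$, cone invariance via the definitions of $\sigma$ and $\delta$, condition (i) of Lemma~\ref{Lemma 3.1} for large $R$ from $(H_2)$ and sublinearity of the remaining terms, and condition (ii) with $y_0\equiv1$ from the right half of $(H_1)$. The only (minor) divergence is in the compactness step, where you argue boundedness plus Arzel\`a--Ascoli directly instead of the paper's mean-value estimate, which is at least as clean.
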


\begin{proof} To use the fixed point in cones, take $$\Lambda_1:=\{y\in C_\omega^1: \|y\|<r\},~~~\Lambda_2:=\{y\in C_\omega^1: \|y\|<R\},$$
and define an operator $\mathcal{T}$ as
$$(\mathcal{T}y)(t)=\int^{\omega}_0G(t,s)\left(\frac{c(s)}{\alpha}y^{1-\alpha-\alpha\rho_2}(s)+\frac{e(s)}{\alpha}y^{1-\alpha}(s)+(1-\alpha)\frac{|y'(s)|^2}{y(s)}+\frac{b(s)}{\alpha}\right)ds,$$
then a fixed point of operator equation $y=\mathcal{T}y$ is an $\omega$-periodic solution of equation \eqref{eq1.2}.
 To this end, we divide it into four steps.

\noindent {\bf Step 1.} We prove $\mathcal{T}(K\cap(\overline{\Lambda}_2\backslash\Lambda_1))\subset K$.
In fact, for
$\forall~y\in K\cap(\overline{\Lambda}_2\backslash\Lambda_1)$, by definitions of $K,~\Lambda_1$ and $\Lambda_2$, one can say $$\sigma r\leq\sigma\|y\|\leq y(t)\leq\|y\|_{\infty}\leq\|y\|\leq R~\mbox{and}~|y'(t)|\leq\delta y(t)\leq\delta R.$$ It follows from the first half of condition $(H_1)$ that
\begin{equation}\label{eq3.1}
\begin{split}
&\frac{c(t)}{\alpha}y^{1-\alpha-\alpha\rho_2}(t)+\frac{e(t)}{\alpha}y^{1-\alpha}(t)+(1-\alpha)\frac{|y'(t)|^2}{y(t)}+\frac{b(t)}{\alpha}\\
\geq&\frac{c(t)}{\alpha}y^{1-\alpha-\alpha\rho_2}(t)+\frac{e(t)}{\alpha}y^{1-\alpha}(t)+\frac{b(t)}{\alpha}\\
\geq&\frac{c_*\sigma^{1-\alpha-\alpha\rho_2}}{\alpha}r^{1-\alpha-\alpha\rho_2}+\frac{e_*\sigma^{1-\alpha}}{\alpha}r^{1-\alpha}+\frac{b_*}{\alpha}\\
>&\frac{e_*\sigma^{1-\alpha}}{\alpha}r^{1-\alpha}+\frac{b_*}{\alpha}\\
\geq&0.
\end{split}
\end{equation}
On the basis of condition $(A)$ and \eqref{eq3.1}, we have the following two inequalities,
\begin{equation*}
\begin{split}
    &\min\limits_{0\leq t\leq\omega}(\mathcal{T}y)(t)\\
    =&\min\limits_{0\leq t\leq\omega}\int^{\omega}_0G(t,s)\left(\frac{c(s)}{\alpha}y^{1-\alpha-\alpha\rho_2}(s)+\frac{e(s)}{\alpha}y^{1-\alpha}(s)+(1-\alpha)\frac{|y'(s)|^2}{y(s)}+\frac{b(s)}{\alpha}\right)ds\\
    \geq&G_*\int^{\omega}_0\left(\frac{c(s)}{\alpha}y^{1-\alpha-\alpha\rho_2}(s)+\frac{e(s)}{\alpha}y^{1-\alpha}(s)+(1-\alpha)\frac{|y'(s)|^2}{y(s)}+\frac{b(s)}{\alpha}\right)ds\\
    =&\sigma\left(G^*+\max\limits_{0\leq s,t\leq\omega}\left|\frac{\partial G(t,s)}{\partial t}\right|\right) \int^{\omega}_0\left(\frac{c(s)}{\alpha}y^{1-\alpha-\alpha\rho_2}(s)+\frac{e(s)}{\alpha}y^{1-\alpha}(s)+(1-\alpha)\frac{|y'(s)|^2}{y(s)}+\frac{b(s)}{\alpha}\right)ds\\
    =&\sigma G^* \int^{\omega}_0\left(\frac{c(s)}{\alpha}y^{1-\alpha-\alpha\rho_2}(s)+\frac{e(s)}{\alpha}y^{1-\alpha}(s)+(1-\alpha)\frac{|y'(s)|^2}{y(s)}+\frac{b(s)}{\alpha}\right)ds\\
    &+\sigma\max\limits_{0\leq s,t\leq\omega}\left|\frac{\partial G(t,s)}{\partial t}\right| \int^{\omega}_0\left(\frac{c(s)}{\alpha}y^{1-\alpha-\alpha\rho_2}(s)+\frac{e(s)}{\alpha}y^{1-\alpha}(s)+(1-\alpha)\frac{|y'(s)|^2}{y(s)}+\frac{b(s)}{\alpha}\right)ds\\
    \geq&\sigma\max\limits_{0\leq t\leq\omega}\int^{\omega}_0G(t,s)\left(\frac{c(s)}{\alpha}y^{1-\alpha-\alpha\rho_2}(s)+\frac{e(s)}{\alpha}y^{1-\alpha}(s)+(1-\alpha)\frac{|y'(s)|^2}{y(s)}+\frac{b(s)}{\alpha}\right)ds\\
    &+\sigma\max\limits_{0\leq t\leq\omega}\int^{\omega}_0\left|\frac{\partial G(t,s)}{\partial t}\right|\left(\frac{c(s)}{\alpha}y^{1-\alpha-\alpha\rho_2}(s)+\frac{e(s)}{\alpha}y^{1-\alpha}(s)+(1-\alpha)\frac{|y'(s)|^2}{y(s)}+\frac{b(s)}{\alpha}\right)ds\\
    =&\sigma\|\mathcal{T}y\|_{\infty}+\sigma\|(\mathcal{T}y)'\|_{\infty}=\sigma\|\mathcal{T}y\|,
\end{split}
\end{equation*}
and
\begin{equation*}
\begin{split}
    |(\mathcal{T}y)'(t)|=&\left|\int^{\omega}_0\frac{\partial G(t,s)}{\partial t}\left(\frac{c(s)}{\alpha}y^{1-\alpha-\alpha\rho_2}(s)+\frac{e(s)}{\alpha}y^{1-\alpha}(s)+(1-\alpha)\frac{|y'(s)|^2}{y(s)}+\frac{b(s)}{\alpha}\right)ds\right|\\
    \leq&\int^{\omega}_0\left|\frac{\partial G(t,s)}{\partial t}\right|\left(\frac{c(s)}{\alpha}y^{1-\alpha-\alpha\rho_2}(s)+\frac{e(s)}{\alpha}y^{1-\alpha}(s)+(1-\alpha)\frac{|y'(s)|^2}{y(s)}+\frac{b(s)}{\alpha}\right)ds\\
    \leq&\max\limits_{0\leq t\leq\omega}\left|\frac{\partial G(t,s)}{\partial t}\right|\int^{\omega}_0\left(\frac{c(s)}{\alpha}y^{1-\alpha-\alpha\rho_2}(s)+\frac{e(s)}{\alpha}y^{1-\alpha}(s)+(1-\alpha)\frac{|y'(s)|^2}{y(s)}+\frac{b(s)}{\alpha}\right)ds\\
    =&\delta G_*\int^{\omega}_0\left(\frac{c(s)}{\alpha}y^{1-\alpha-\alpha\rho_2}(s)+\frac{e(s)}{\alpha}y^{1-\alpha}(s)+(1-\alpha)\frac{|y'(s)|^2}{y(s)}+\frac{b(s)}{\alpha}\right)ds\\
    \leq&\delta\int^{\omega}_0G(t,s)\left(\frac{c(s)}{\alpha}y^{1-\alpha-\alpha\rho_2}(s)+\frac{e(s)}{\alpha}y^{1-\alpha}(s)+(1-\alpha)\frac{|y'(s)|^2}{y(s)}+\frac{b(s)}{\alpha}\right)ds\\
    =&\delta(\mathcal{T}y)(t).
\end{split}
\end{equation*}
This shows that $\mathcal{T}(K\cap(\overline{\Lambda}_2\backslash\Lambda_1))\subset K$.

\noindent {\bf Step 2.}  We  prove that $\mathcal{T}$ is a completely continuous operator. For any $y\in
K\cap(\overline{\Lambda}_2\backslash\Lambda_1)$, we have
\begin{equation*}\label{eq3.2}
\begin{split}
    |\mathcal{T}y|=&\left|\int^{\omega}_0G(t,s)\left(\frac{c(s)}{\alpha}y^{1-\alpha-\alpha\rho_2}(s)+\frac{e(s)}{\alpha}y^{1-\alpha}(s)+(1-\alpha)\frac{|y'(s)|^2}{y(s)}+\frac{b(s)}{\alpha}\right)ds\right|\\
    =&\int^{\omega}_0G(t,s)\left(\frac{c(s)}{\alpha}y^{1-\alpha-\alpha\rho_2}(s)+\frac{e(s)}{\alpha}y^{1-\alpha}(s)+(1-\alpha)\frac{|y'(s)|^2}{y(s)}+\frac{b(s)}{\alpha}\right)ds\\
    \leq&G^*\left(\frac{c^*}{\alpha}R^{1-\alpha-\alpha\rho_2}+\frac{e^*}{\alpha}R^{1-\alpha}+(1-\alpha)\frac{(\delta R)^2}{\sigma r}+\frac{b^*}{\alpha}\right)\omega:=N_1,
\end{split}
\end{equation*}
and
\begin{equation*}\label{eq3.2*}
\begin{split}
    |(\mathcal{T}y)'(t)|=&\left|\int^{\omega}_0\frac{\partial G(t,s)}{\partial t}\left(\frac{c(s)}{\alpha}y^{1-\alpha-\alpha\rho_2}(s)+\frac{e(s)}{\alpha}y^{1-\alpha}(s)+(1-\alpha)\frac{|y'(s)|^2}{y(s)}+\frac{b(s)}{\alpha}\right)ds\right|\\
    \leq&\delta G_*\int^{\omega}_0\left(\frac{c(s)}{\alpha}y^{1-\alpha-\alpha\rho_2}(s)+\frac{e(s)}{\alpha}y^{1-\alpha}(s)+(1-\alpha)\frac{|y'(s)|^2}{y(s)}+\frac{b(s)}{\alpha}\right)ds\\
    \leq&\delta G_*\left(\frac{c^*}{\alpha}R^{1-\alpha-\alpha\rho_2}+\frac{e^*}{\alpha}R^{1-\alpha}+(1-\alpha)\frac{(\delta R)^2}{\sigma r}+\frac{b^*}{\alpha}\right)\omega:=N_2.
\end{split}
\end{equation*}
Furthermore, for any $ y_1,~y_2\in
K\cap(\overline{\Lambda}_2\backslash\Lambda_1)$ with $y_1\neq y_2$,
\begin{equation*}
|\mathcal{T}y_1-\mathcal{T}y_2|=\left|\frac{d\mathcal{T}(\theta y_1+(1-\theta)y_2)}{dt}(y_1-y_2)\right|\leq N_2|y_1-y_2|,
\end{equation*}
where $0<\theta<1$. That is to say, $\mathcal{T}$ is a uniformly bounded and equicontinuity operator, it follows the Arzela-Ascoli theorem that $\mathcal{T}: K\cap(\overline{\Lambda}_2\backslash\Lambda_1)\rightarrow K$ is a continuous and completely continuous operator.

\noindent {\bf Step 3.} We prove $(i)$ of Lemma \ref{Lemma 3.1}, i.e.,
\begin{equation}\label{eq3.3}
    \|\mathcal{T}y\|\leq\|y\|,~~\mbox{for}~~y\in K\cap\partial\Lambda_2.
\end{equation}
In view of $y\in K\cap\partial\Lambda_2$, there is
$$\sigma R\leq y(t)\leq\|y\|_{\infty}\leq\|y\|=R~\mbox{and}~|y'|\leq\delta y(t)\leq\delta R.$$
Therefore, we arrive at
\begin{equation}\label{eq3.4}
    \begin{split}
    \|\mathcal{T}y\|=&\|\mathcal{T}y\|_{\infty}+\|(\mathcal{T}y)'\|_{\infty}\\
    =&\max\limits_{0\leq t\leq\omega}\int^{\omega}_0G(t,s)\left(\frac{c(s)}{\alpha}y^{1-\alpha-\alpha\rho_2}(s)+\frac{e(s)}{\alpha}y^{1-\alpha}(s)+(1-\alpha)\frac{|y'(s)|^2}{y(s)}+\frac{b(s)}{\alpha}\right)ds\\
    &+\max\limits_{0\leq t\leq\omega}\int^{\omega}_0\left|\frac{\partial G(t,s)}{\partial t}\right|\left(\frac{c(s)}{\alpha}y^{1-\alpha-\alpha\rho_2}(s)+\frac{e(s)}{\alpha}y^{1-\alpha}(s)+(1-\alpha)\frac{|y'(s)|^2}{y(s)}+\frac{b(s)}{\alpha}\right)ds\\
    \leq&(G^*+\delta G_*)\left(\frac{c^*}{\alpha}R^{1-\alpha-\alpha\rho_2}+\frac{e^*}{\alpha}R^{1-\alpha}+\frac{(1-\alpha)\delta^2}{\sigma}R+\frac{b^*}{\alpha}\right)\omega.
    \end{split}
\end{equation}
In view of $1-\alpha-\alpha\rho_2<1$ and $1-\alpha<1$, condition $(H_2)$ implies that there is a sufficiently large $R$ such that
\begin{equation*}
(G^*+\delta G_*)\left(\frac{c^*}{\alpha}R^{1-\alpha-\alpha\rho_2}+\frac{e^*}{\alpha}R^{1-\alpha}+\frac{(1-\alpha)\delta^2}{\sigma}R+\frac{b^*}{\alpha}\right)\omega\leq R=\|y\|.
\end{equation*}
Therefore, inequality \eqref{eq3.3} holds.

\noindent {\bf Step 4.} We prove $(ii)$ of Lemma \ref{Lemma 3.1}. To do so, let $y_0=1$, then $y_0\in K\backslash\{0\}$. Now we prove that
\begin{equation}\label{eq3.5}
    y\neq \mathcal{T}y+\lambda y_0,~~\mbox{for}~~y\in
K\cap\partial\Lambda_1,~~\lambda>0
\end{equation}
by way of contradiction. If inequality \eqref{eq3.5} does not hold, we can assume that there exist $y_1\in K\cap\partial\Lambda_1$
and $\lambda_0>0$ such that
$$y_1=\mathcal{T}y_1+\lambda_0 y_0.$$ To be sure, for $y_1\in
K\cap\partial\Lambda_1$, it holds that
$$\sigma r=\sigma\|y_1\|\leq y_1(t)\leq\|y_1\|_{\infty}\leq\|y_1\|=r~\mbox{and}~|y'_1(t)|\leq\delta y_1(t)\leq\delta r.$$ Therefore, from condition $(H_1)$, we obtain
\begin{equation}\label{eq3.6}
    \begin{split}
    y_1=&\mathcal{T}y_1+\lambda_0 y_0\\
    =&\int^{\omega}_0G(t,s)\left(\frac{c(s)}{\alpha}y_1^{1-\alpha-\alpha\rho_2}(s)+\frac{e(s)}{\alpha}y_1^{1-\alpha}(s)+(1-\alpha)\frac{|y'_1(s)|^2}{y_1(s)}+\frac{b(s)}{\alpha}\right)ds+\lambda_0\\
    >&\int^{\omega}_0G(t,s)\left(\frac{c(s)}{\alpha}y_1^{1-\alpha-\alpha\rho_2}(s)+\frac{e(s)}{\alpha}y_1^{1-\alpha}(s)+\frac{b(s)}{\alpha}\right)ds\\
    \geq&G_*\left(\frac{c_*}{\alpha}(\sigma r)^{1-\alpha-\alpha\rho_2}+\frac{e_*}{\alpha}(\sigma r)^{1-\alpha}+\frac{b_*}{\alpha}\right)\omega\\
    \geq&\frac{G_*c_*\sigma^{1-\alpha-\alpha\rho_2}\omega}{\alpha}r^{1-\alpha-\alpha\rho_2}\\
    \geq&r=\|y_1\|,
    \end{split}
\end{equation}
which contradicts with $y_1\in K\cap\partial\Lambda_1$. Hence, inequality \eqref{eq3.5} holds.

It follows from above four steps that all the conditions of Lemma \ref{Lemma 3.1} are satisfied. Therefore, it follows that $\mathcal{T}$ has a fixed point $y\in K\cap(\overline{\Lambda}_2\backslash\Lambda_1)$.
Obviously, the fixed point is an $\omega$-periodic solution of
equation \eqref{eq1.2}. That is, equation \eqref{eq1.2} has at least one positive $\omega$-periodic solution $y(t)$ satisfying $r\leq\|y\|\leq R$.
\end{proof}

\begin{Corollary}{\label{Corollary 3.1}} Assume that inequality \eqref{eq2.3}, conditions $(H_1),~(H_2)$ and $\rho_1>\rho_2$ hold. Then equation \eqref{eq1.3} has at least one positive $\omega$-periodic solution.
\end{Corollary}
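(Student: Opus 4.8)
The plan is to derive Corollary \ref{Corollary 3.1} as a direct specialization of Theorem \ref{Theorem 3.1} to the undamped case $p(t)\equiv 0$. When $p(t)\equiv 0$, equation \eqref{eq1.1} reduces to \eqref{eq1.3}, and the associated linear problem \eqref{eq2.1} becomes $u''(t)+l(t)u(t)=h(t)$ with $l(t)=q(t)/\alpha$. The key observation is that Theorem \ref{Theorem 3.1} only uses hypothesis $(A)$ — the positivity of Green's function — together with $(H_1)$, $(H_2)$ and $\rho_1>\rho_2$; it never uses the damping term in any essential way. So the entire task reduces to checking that, in the undamped setting, the extra hypothesis \eqref{eq2.3} guarantees condition $(A)$.

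First I would specialize to $l(t)=\xi^2$ with $\xi>0$, i.e. assume $q(t)/\alpha=\xi^2$ is a positive constant (this is the natural reading under which Lemma \ref{Theorem 2.3} applies; recall $\alpha=1/(\rho_1+1)>0$). Then by Lemma \ref{Theorem 2.3}, the Green's function of \eqref{eq2.1} with $p\equiv 0$ is given explicitly by \eqref{eq2.2}, and the hypothesis $\xi<\pi/\omega$ from \eqref{eq2.3} is exactly what Lemma \ref{Theorem 2.3} requires to conclude that $G(t,s)>0$ on $[0,\omega]\times[0,\omega]$, i.e. condition $(A)$ holds. With $(A)$ in hand, the quantities $G^*,G_*,\sigma,\delta$ are well defined exactly as in Section 2, so hypotheses $(H_1)$ and $(H_2)$ make sense verbatim.

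Next I would invoke Theorem \ref{Theorem 3.1} with the transformation $x=y^\alpha$, $\alpha=1/(\rho_1+1)$: since \eqref{eq1.3} is the $p\equiv 0$ case of \eqref{eq1.1}, it is converted by the same substitution into the $p\equiv 0$ case of \eqref{eq1.2}, namely
$$
y''(t)+\frac{q(t)}{\alpha}y(t)=\frac{c(t)}{\alpha}y^{1-\alpha-\alpha\rho_2}(t)+\frac{e(t)}{\alpha}y^{1-\alpha}(t)+(1-\alpha)\frac{|y'(t)|^2}{y(t)}+\frac{b(t)}{\alpha},
$$
and the operator $\mathcal{T}$ built from $G(t,s)$ is exactly the one analyzed in the four-step proof of Theorem \ref{Theorem 3.1}. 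Because conditions $(A)$ (just verified), $(H_1)$, $(H_2)$ and $\rho_1>\rho_2$ all hold, Theorem \ref{Theorem 3.1} yields a positive $\omega$-periodic solution $y(t)$ with $r\le\|y\|\le R$; setting $x(t)=y(t)^\alpha$ gives the desired positive $\omega$-periodic solution of \eqref{eq1.3}.

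The only mildly delicate point — and the place I would be most careful — is the compatibility between Lemma \ref{Theorem 2.3}, which is stated for the constant coefficient $l(t)=\xi^2$, and the general periodic $q(t)$ allowed in Theorem \ref{Theorem 3.1}; strictly speaking the corollary as phrased (just ``inequality \eqref{eq2.3}'') is meaningful only once one reads $l(t)=q(t)/\alpha=\xi^2$ as constant, so I would make that identification explicit at the start of the argument. Everything after that is a routine quotation of the already-proved Theorem \ref{Theorem 3.1}, so there is no genuine new obstacle — the corollary is a corollary in the literal sense.
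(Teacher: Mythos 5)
Your proposal is correct and follows exactly the route the paper intends (the corollary is stated without proof as an immediate specialization): inequality \eqref{eq2.3} together with Lemma \ref{Theorem 2.3} gives condition $(A)$ for the undamped problem, and then Theorem \ref{Theorem 3.1} applies verbatim to produce the positive $\omega$-periodic solution of \eqref{eq1.3}. Your remark that this reading requires $l(t)=q(t)/\alpha=\xi^2$ to be a positive constant is a fair and accurate observation about an implicit assumption in the paper (consistent with the examples, where $q$ is constant), not a gap in your argument.
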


\begin{Theorem}{\label{Theorem 3.2}} Assume that conditions $(A),~(H_2)$ and $\rho_1<\rho_2$ hold.
Furthermore, suppose there exist $R>r>0$ such that:

$(H_3)$ $\frac{1}{\sigma}\left(-\frac{b_*}{e_*}\right)^{\frac{1}{1-\alpha}}\leq r\leq\left(\frac{G_*c_*\omega}{\alpha}\right)^{\frac{1}{\alpha+\alpha\rho_2}}$.\\
Then there is at least one
positive $\omega$-periodic solution of equation \eqref{eq1.2}, saying $y(t)$ satisfying $r\leq\|y\|\leq R$. Consequently, it is a positive periodic solution of equation \eqref{eq1.1}.
\end{Theorem}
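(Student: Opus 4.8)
The plan is to reproduce, almost verbatim, the four-step scheme used to prove Theorem~\ref{Theorem 3.1}: I would apply the cone fixed point theorem of Lemma~\ref{Lemma 3.1} to the \emph{same} operator $\mathcal{T}$, the same cone $K$, and the same open sets $\Lambda_1=\{y\in C_\omega^1:\|y\|<r\}$, $\Lambda_2=\{y\in C_\omega^1:\|y\|<R\}$. Conditions $(A)$ and $(H_2)$ are untouched, so they enter exactly as before. The only genuine change is that $\rho_1<\rho_2$ makes $1-\alpha-\alpha\rho_2<0$, so the map $s\mapsto s^{1-\alpha-\alpha\rho_2}$ is \emph{decreasing} on $(0,\infty)$; every monotone estimate on the term $\frac{c(t)}{\alpha}y^{1-\alpha-\alpha\rho_2}(t)$ from the proof of Theorem~\ref{Theorem 3.1} therefore has its direction reversed, and $(H_3)$ is exactly the version of $(H_1)$ tailored to that reversal.

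For Step~1 (invariance $\mathcal{T}(K\cap(\overline{\Lambda}_2\backslash\Lambda_1))\subset K$) I would note that on that set one still has $\sigma r\le y(t)\le R$ and $|y'(t)|\le\delta y(t)$, and that $\frac{c(t)}{\alpha}y^{1-\alpha-\alpha\rho_2}(t)$ is positive regardless of the sign of its exponent because $c(t)>0$; dropping it along with the $(1-\alpha)|y'|^2/y$ term and using $y(t)\ge\sigma r$ on the $e$-term together with the first inequality of $(H_3)$ reproduces the bound in~\eqref{eq3.1}, so the integrand is $\ge\frac{e_*\sigma^{1-\alpha}}{\alpha}r^{1-\alpha}+\frac{b_*}{\alpha}\ge0$. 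With positivity of $G$, the estimates $\min_t(\mathcal{T}y)(t)\ge\sigma\|\mathcal{T}y\|$ and $|(\mathcal{T}y)'(t)|\le\delta(\mathcal{T}y)(t)$ then follow from the definitions of $\sigma,\delta$ exactly as in Theorem~\ref{Theorem 3.1}, since those computations never use the sign of the exponent. Step~2 (complete continuity) carries over unchanged: on $K\cap(\overline{\Lambda}_2\backslash\Lambda_1)$ the bound $y(t)\ge\sigma r$ keeps $y^{1-\alpha-\alpha\rho_2}(t)\le(\sigma r)^{1-\alpha-\alpha\rho_2}$ finite, so the nonlinearity and its derivative are uniformly bounded and the Arzela--Ascoli argument applies.

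For Step~3 (condition $(i)$ of Lemma~\ref{Lemma 3.1}) I would use that $y\in K\cap\partial\Lambda_2$ gives $\sigma R\le y(t)\le R$, whence \emph{now} $y^{1-\alpha-\alpha\rho_2}(t)\le(\sigma R)^{1-\alpha-\alpha\rho_2}=\sigma^{1-\alpha-\alpha\rho_2}R^{1-\alpha-\alpha\rho_2}$; replacing the first term of~\eqref{eq3.4} accordingly yields
$$\|\mathcal{T}y\|\le(G^*+\delta G_*)\left(\frac{c^*\sigma^{1-\alpha-\alpha\rho_2}}{\alpha}R^{1-\alpha-\alpha\rho_2}+\frac{e^*}{\alpha}R^{1-\alpha}+\frac{(1-\alpha)\delta^2}{\sigma}R+\frac{b^*}{\alpha}\right)\omega.$$
Since $1-\alpha-\alpha\rho_2<0$ and $0<1-\alpha<1$, the first, second and fourth terms in the bracket are all $o(R)$ as $R\to\infty$, so condition $(H_2)$ (whose left side is the coefficient of the linear term) lets me fix $R$ large enough to make the right side $\le R=\|y\|$. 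For Step~4 (condition $(ii)$) I would set $y_0\equiv1$ and argue by contradiction: if $y_1=\mathcal{T}y_1+\lambda_0y_0$ with $y_1\in K\cap\partial\Lambda_1$, $\lambda_0>0$, then $\sigma r\le y_1(t)\le r$, and since the exponent is negative the bound $y_1(t)\le r$ now gives the needed \emph{lower} bound $y_1^{1-\alpha-\alpha\rho_2}(t)\ge r^{1-\alpha-\alpha\rho_2}$, while $y_1(t)\ge\sigma r$ still gives $y_1^{1-\alpha}(t)\ge(\sigma r)^{1-\alpha}$. Dropping the nonnegative $(1-\alpha)|y_1'|^2/y_1$ and $\lambda_0$ terms, using $G\ge G_*$, and then invoking the first inequality of $(H_3)$ to get $\frac{e_*}{\alpha}(\sigma r)^{1-\alpha}+\frac{b_*}{\alpha}\ge0$, I obtain
$$y_1>G_*\left(\frac{c_*}{\alpha}r^{1-\alpha-\alpha\rho_2}+\frac{e_*}{\alpha}(\sigma r)^{1-\alpha}+\frac{b_*}{\alpha}\right)\omega\ge\frac{G_*c_*\omega}{\alpha}r^{1-\alpha-\alpha\rho_2}\ge r=\|y_1\|,$$
the last step being exactly the second inequality of $(H_3)$ rewritten as $\frac{G_*c_*\omega}{\alpha}\ge r^{\alpha+\alpha\rho_2}$, a contradiction with $\|y_1\|=r$. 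Lemma~\ref{Lemma 3.1} then yields a fixed point $y\in K\cap(\overline{\Lambda}_2\backslash\Lambda_1)$, i.e. a positive $\omega$-periodic solution of~\eqref{eq1.2} with $r\le\|y\|\le R$, hence of~\eqref{eq1.1}.

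I do not expect a serious obstacle here, since the theorem is structurally the twin of Theorem~\ref{Theorem 3.1}; the only place needing genuine care is the bookkeeping of the sign reversals. In Step~3 one must be sure that the now-\emph{singular} factor $y^{1-\alpha-\alpha\rho_2}$ is actually small, not large, for large $R$ — which works precisely because elements of $K\cap\partial\Lambda_2$ stay bounded below by $\sigma R$ — and in Step~4 the lower bound on $y_1^{1-\alpha-\alpha\rho_2}$ must be read off from $y_1(t)\le r$ rather than from $y_1(t)\ge\sigma r$. Once those two sign flips are tracked, $(H_3)$ supplies exactly the threshold inequalities that close Steps~3 and~4.
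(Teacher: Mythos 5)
Your proposal is correct and follows essentially the same route as the paper: the paper also reuses the four-step scheme of Theorem \ref{Theorem 3.1} verbatim, obtains in Step 3 the bound $(G^*+\delta G_*)\left(\frac{c^*\sigma^{1-\alpha-\alpha\rho_2}}{\alpha}R^{1-\alpha-\alpha\rho_2}+\frac{e^*}{\alpha}R^{1-\alpha}+\frac{(1-\alpha)\delta^2}{\sigma}R+\frac{b^*}{\alpha}\right)\omega\leq R$ for large $R$ via $(H_2)$, and in Step 4 uses exactly your sign-flip observation that $y_1\leq r$ with negative exponent gives $y_1^{1-\alpha-\alpha\rho_2}\geq r^{1-\alpha-\alpha\rho_2}$, closing with the second inequality of $(H_3)$. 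Your bookkeeping of the reversed monotonicity matches the paper's argument exactly.
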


\begin{proof} We follow the same strategy and notations as in the proof of Theorem \ref{Theorem 3.1}. Now we prove \eqref{eq3.3}, similar to \eqref{eq3.4}, by using condition $(H_2)$ and $\rho_1<\rho_2$, one can prove that
\begin{equation*}
    \begin{split}
    \|\mathcal{T}y\|=&\|\mathcal{T}y\|_{\infty}+\|(\mathcal{T}y)'\|_{\infty}\\
    =&\max\limits_{0\leq t\leq\omega}\int^{\omega}_0G(t,s)\left(\frac{c(s)}{\alpha}y^{1-\alpha-\alpha\rho_2}(s)+\frac{e(s)}{\alpha}y^{1-\alpha}(s)+(1-\alpha)\frac{|y'(s)|^2}{y(s)}+\frac{b(s)}{\alpha}\right)ds\\
    &+\max\limits_{0\leq t\leq\omega}\int^{\omega}_0\left|\frac{\partial G(t,s)}{\partial t}\right|\left(\frac{c(s)}{\alpha}y^{1-\alpha-\alpha\rho_2}(s)+\frac{e(s)}{\alpha}y^{1-\alpha}(s)+(1-\alpha)\frac{|y'(s)|^2}{y(s)}+\frac{b(s)}{\alpha}\right)ds\\
    \leq&(G^*+\delta G_*)\left(\frac{c^*\sigma^{1-\alpha-\alpha\rho_2}}{\alpha}R^{1-\alpha-\alpha\rho_2}+\frac{e^*}{\alpha}R^{1-\alpha}+\frac{(1-\alpha)\delta^2}{\sigma}R+\frac{b^*}{\alpha}\right)\omega\\
    \leq&R=\|y\|
    \end{split}
\end{equation*}
holds if $R$ is large enough. This means that inequality \eqref{eq3.3} holds.

In what follows, we prove that inequality \eqref{eq3.5}, from condition $(H_3)$ and $\rho_1<\rho_2$, \eqref{eq3.6} gives
\begin{equation*}
    \begin{split}
    y_1=&\mathcal{T}y_1+\lambda_0 y_0\\
    =&\int^{\omega}_0G(t,s)\left(\frac{c(s)}{\alpha}y_1^{1-\alpha-\alpha\rho_2}(s)+\frac{e(s)}{\alpha}y_1^{1-\alpha}(s)+(1-\alpha)\frac{|y'_1(s)|^2}{y_1(s)}+\frac{b(s)}{\alpha}\right)ds+\lambda_0\\
    >&G_*\left(\frac{c_*}{\alpha}r^{1-\alpha-\alpha\rho_2}+\frac{e_*}{\alpha}(\sigma r)^{1-\alpha}+\frac{b_*}{\alpha}\right)\omega\\
    \geq&\frac{G_*c_*\omega}{\alpha}r^{1-\alpha-\alpha\rho_2}\\
    \geq&r=\|y_1\|,
    \end{split}
\end{equation*}
which contradicts with $y_1\in K\cap\partial\Lambda_1$. Then inequality \eqref{eq3.5} follows.
Therefore, the conclusion follows from Lemma \ref{Lemma 3.1} immediately. 
\end{proof}

\begin{Corollary}{\label{Corollary 3.2}} Assume that inequality \eqref{eq2.3}, conditions $(H_2),~(H_3)$ and $\rho_1<\rho_2$ hold. Then there is at least one positive $\omega$-periodic solution of equation \eqref{eq1.3}.
\end{Corollary}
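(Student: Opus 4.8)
The plan is to obtain Corollary \ref{Corollary 3.2} as the damping-free specialization of Theorem \ref{Theorem 3.2}. First I would observe that when $p(t)\equiv0$ equation \eqref{eq1.3} is precisely equation \eqref{eq1.1} without the term $p(t)x'(t)$, so the substitution $x=y^{\alpha}$ with $\alpha=\frac{1}{\rho_1+1}$ turns it into equation \eqref{eq1.2} in which the first-derivative term of the linear part drops out. Since $\alpha\in(0,1)$, the map $y\mapsto y^{\alpha}$ is an order-preserving bijection of the positive cone, and it preserves $\omega$-periodicity and $C^{1}$-regularity; hence a positive $\omega$-periodic solution of the reduced \eqref{eq1.2} yields a positive $\omega$-periodic solution of \eqref{eq1.3}, and conversely.

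Next I would verify that hypothesis $(A)$ holds in this setting. With $p\equiv0$, the linear problem \eqref{eq2.1} reads $u''(t)+l(t)u(t)=h(t)$ with $l(t)=\frac{q(t)}{\alpha}$; taking $l(t)=\xi^{2}$ for a constant $\xi>0$, inequality \eqref{eq2.3}, i.e. $\xi<\frac{\pi}{\omega}$, is exactly the hypothesis of Lemma \ref{Theorem 2.3}, which then supplies the explicit Green's function \eqref{eq2.2} and guarantees $G(t,s)>0$ on $[0,\omega]\times[0,\omega]$, that is, condition $(A)$. From \eqref{eq2.2} one can, if needed, read off $G^{*}$, $G_{*}$ and $\max_{0\le s,t\le\omega}\bigl|\frac{\partial G(t,s)}{\partial t}\bigr|$, hence the constants $\sigma$ and $\delta$ that appear in $(H_2)$ and $(H_3)$.

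Finally, once $(A)$ is in force, the remaining assumptions of Theorem \ref{Theorem 3.2} --- namely $(H_2)$, $(H_3)$ and $\rho_1<\rho_2$ --- are precisely those assumed in the corollary. Applying Theorem \ref{Theorem 3.2} verbatim produces a positive $\omega$-periodic solution $y(t)$ of \eqref{eq1.2} with $r\le\|y\|\le R$, and then $x(t)=y(t)^{\alpha}$ is the desired positive $\omega$-periodic solution of \eqref{eq1.3}. I expect no genuine difficulty here: the only point deserving care is the bookkeeping that identifies the constant $\xi$ of Lemma \ref{Theorem 2.3} with $l(t)=q(t)/\alpha$ (so that one is implicitly taking $q(t)\equiv\alpha\xi^{2}$), together with the routine check that positivity and periodicity survive the change of variables $x=y^{\alpha}$; everything else is an immediate quotation of Theorem \ref{Theorem 3.2}.
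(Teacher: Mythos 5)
Your proposal is correct and follows essentially the same route the paper intends: inequality \eqref{eq2.3} together with Lemma \ref{Theorem 2.3} (the case $p\equiv0$, $l(t)=\xi^2$) supplies condition $(A)$, after which Theorem \ref{Theorem 3.2} applies verbatim and the change of variables $x=y^{\alpha}$ returns a positive $\omega$-periodic solution of \eqref{eq1.3}. Your remark about the implicit identification $q(t)\equiv\alpha\xi^{2}$ is an accurate reading of the setting of Lemma \ref{Theorem 2.3}, and nothing further is needed.
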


\begin{Theorem} {\label{Theorem 3.3}} Assume that conditions $(A),~(H_2)$ and $\rho_1=\rho_2$ hold.
Furthermore, suppose there exist $R>r>0$ such that one of the following two cases holds:

Case I: $b_*+c_*\geq0$;

Case II: $b_*+c_*<0$ and condition $(H_4)$ hold:

$(H_4)$ $\frac{1}{\sigma}\left(-\frac{b_*+c_*}{e_*}\right)^{\frac{1}{1-\alpha}}<r\leq\frac{G_*\bar{b}^+\omega}{\alpha}$,
where $b^+(t):=\max\{0,b(t)\},~\bar{b}^+:=\frac{1}{\omega}\int_0^\omega b^+(t)dt$.\\
Then there is at least one
positive $\omega$-periodic solution of equation \eqref{eq1.2}, saying $y(t)$ satisfying $r\leq\|y\|\leq R$. Consequently, it is a positive periodic solution of equation \eqref{eq1.1}.
\end{Theorem}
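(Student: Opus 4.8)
The plan is to re-run, almost verbatim, the four-step fixed-point-in-cones argument (Lemma~\ref{Lemma 3.1}) used for Theorems~\ref{Theorem 3.1} and~\ref{Theorem 3.2}. The simplifying feature of the case $\rho_1=\rho_2$ is that $1-\alpha-\alpha\rho_2=0$, so the term $\frac{c(t)}{\alpha}y^{1-\alpha-\alpha\rho_2}(t)$ collapses to the constant $\frac{c(t)}{\alpha}$ and merges with $\frac{b(t)}{\alpha}$; thus the nonlinearity driving \eqref{eq1.2} is
$$\Phi_y(t):=\frac{b(t)+c(t)}{\alpha}+\frac{e(t)}{\alpha}y^{1-\alpha}(t)+(1-\alpha)\frac{|y'(t)|^2}{y(t)},$$
and one keeps the same sets $\Lambda_1,\Lambda_2$, the same cone $K$, and $(\mathcal{T}y)(t)=\int_0^\omega G(t,s)\Phi_y(s)\,ds$, so that a fixed point of $\mathcal{T}$ is an $\omega$-periodic solution of \eqref{eq1.2}, hence $x=y^\alpha$ solves \eqref{eq1.1}.

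For Step~1, $\mathcal{T}(K\cap(\overline{\Lambda}_2\backslash\Lambda_1))\subset K$, the estimates deriving $\min\mathcal{T}y\ge\sigma\|\mathcal{T}y\|$ and $|(\mathcal{T}y)'|\le\delta\,\mathcal{T}y$ from condition $(A)$ are literally those of Theorem~\ref{Theorem 3.1}; the only new point is the nonnegativity of $\Phi_y$ on $\sigma r\le y(t)\le R$. In Case~I this is immediate, since $b(t)+c(t)\ge b_*+c_*\ge 0$ forces every summand of $\Phi_y$ to be nonnegative. In Case~II one discards the nonnegative term $(1-\alpha)\frac{|y'|^2}{y}$ and bounds $\alpha\Phi_y(t)\ge (b_*+c_*)+e_*(\sigma r)^{1-\alpha}$, which is $>0$ precisely because of the first inequality in $(H_4)$. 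Step~2, complete continuity, is unchanged: the constraints $\sigma r\le y\le R$ and $|y'|\le\delta R$ make all $y$-dependent quantities uniformly bounded, so $\mathcal{T}$ maps $K\cap(\overline{\Lambda}_2\backslash\Lambda_1)$ into a bounded equicontinuous family and Arzela--Ascoli applies.

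Step~3, condition (i) of Lemma~\ref{Lemma 3.1}, goes exactly as in \eqref{eq3.4}: for $y\in K\cap\partial\Lambda_2$ one has $\|\mathcal{T}y\|\le(G^*+\delta G_*)\big(\frac{b^*+c^*}{\alpha}+\frac{e^*}{\alpha}R^{1-\alpha}+\frac{(1-\alpha)\delta^2}{\sigma}R\big)\omega$, and since $1-\alpha<1$ while $(H_2)$ makes the coefficient of the $R$-linear term strictly less than $1$, this is $\le R=\|y\|$ for $R$ large. For Step~4, condition (ii), set $y_0\equiv 1$ and argue by contradiction: if $y_1=\mathcal{T}y_1+\lambda_0$ for some $y_1\in K\cap\partial\Lambda_1$ and $\lambda_0>0$, the crucial pointwise estimate is $\alpha\Phi_{y_1}(s)\ge b^+(s)$ for every $s$ --- clear when $b(s)\ge 0$ because $c(s)>0$ and the remaining summands are nonnegative, and when $b(s)<0$ it is just the nonnegativity of $\Phi_{y_1}$ already established in Step~1. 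Integrating against $G\ge G_*$ gives $y_1>\int_0^\omega G(t,s)\Phi_{y_1}(s)\,ds\ge\frac{G_*}{\alpha}\int_0^\omega b^+(s)\,ds=\frac{G_*\bar{b}^+\omega}{\alpha}\ge r=\|y_1\|$ in Case~II, the final inequality being the second half of $(H_4)$; in Case~I one concludes similarly (alternatively, retaining the term $\frac{e(s)}{\alpha}y_1^{1-\alpha}(s)$ and using $b_*+c_*\ge 0$). This contradiction yields (ii), so Lemma~\ref{Lemma 3.1} produces a fixed point $y$ with $r\le\|y\|\le R$.

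The step I expect to be the main obstacle is Step~4 in Case~I: unlike Case~II, no explicit upper bound on $r$ (of the type $r\le G_*\bar{b}^+\omega/\alpha$) is recorded among the hypotheses, so the inequality $y_1\ge r$ must be extracted from $b_*+c_*\ge 0$ together with the lower bounds on $y_1$ and $y_1^{1-\alpha}$ alone, which tacitly forces a smallness condition on $r$ that should be made explicit. The other delicate item --- already visible in Step~1 --- is keeping uniform sign control of the composite nonlinearity $\Phi_y$ over the whole annulus $K\cap(\overline{\Lambda}_2\backslash\Lambda_1)$; this is exactly what the Case~I/Case~II dichotomy, and the two pieces of $(H_4)$, are designed to secure.
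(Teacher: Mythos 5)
Your proposal is correct and follows essentially the same route as the paper's proof: merging $c(t)$ into $b(t)$ when $\rho_1=\rho_2$, the same cone and operator with the four-step fixed-point argument, condition $(H_2)$ for the large-$R$ estimate, and the $b^+$-splitting together with the two halves of $(H_4)$ for the small-$r$ contradiction in Case II. Your remark about Case I is consistent with the paper, whose own proof likewise only asserts that the lower estimate ``can be satisfied if $r>0$ is small enough'' rather than recording an explicit upper bound on $r$ among the hypotheses.
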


\begin{proof} If $\rho_1=\rho_2$, equation \eqref{eq1.2} can be rewritten as
\begin{equation*}
    y''(t)+p(t)y'(t)+\frac{q(t)}{\alpha}y(t)=\frac{e(t)}{\alpha}y^{1-\alpha}(t)+(1-\alpha)\frac{|y'(t)|^2}{y(t)}+\frac{b(t)+c(t)}{\alpha},
\end{equation*}
and operator $\mathcal{T}$ as
$$(\mathcal{T}y)(t)=\int^{\omega}_0G(t,s)\left(\frac{e(s)}{\alpha}y^{1-\alpha}(s)+(1-\alpha)\frac{|y'(s)|^2}{y(s)}+\frac{b(s)+c(s)}{\alpha}\right)ds.$$

Similar to the proof of Theorem \ref{Theorem 3.1}. For $\forall~y\in K\cap(\overline{\Lambda}_2\backslash\Lambda_1)$, we have
\begin{equation*}
\begin{split}
&\frac{e(t)}{\alpha}y^{1-\alpha}(t)+(1-\alpha)\frac{|y'(t)|^2}{y(t)}+\frac{b(t)+c(t)}{\alpha}\\
\geq&\frac{e(t)}{\alpha}y^{1-\alpha}(t)+\frac{b(t)+c(t)}{\alpha}\\
\geq&\frac{e_*\sigma^{1-\alpha}}{\alpha}r^{1-\alpha}+\frac{b_*+c_*}{\alpha}\\
>&0.
\end{split}
\end{equation*}

Case I: if $b_*+c_*\geq0$, it is obvious that the above inequality holds;

Case II: if $b_*+c_*<0$, from the first half part of condition $(H_4)$, we can get the above inequality holds as well.

Furthermore, one can prove that $\mathcal{T}(K\cap(\overline{\Lambda}_2\backslash\Lambda_1))\subset K$ and $\mathcal{T}: K\cap(\overline{\Lambda}_2\backslash\Lambda_1)\rightarrow K$ is a continuous and completely continuous operator.

Now we prove inequality \eqref{eq3.3}, similar to \eqref{eq3.4}, by using condition $(H_2)$ and $\rho_1=\rho_2$, one can prove that
\begin{equation*}
    \begin{split}
    \|\mathcal{T}y\|=&\|\mathcal{T}y\|_{\infty}+\|(\mathcal{T}y)'\|_{\infty}\\
    =&\max\limits_{0\leq t\leq\omega}\int^{\omega}_0G(t,s)\left(\frac{c(s)}{\alpha}y^{1-\alpha-\alpha\rho_2}(s)+\frac{e(s)}{\alpha}y^{1-\alpha}(s)+(1-\alpha)\frac{|y'(s)|^2}{y(s)}+\frac{b(s)}{\alpha}\right)ds\\
    &+\max\limits_{0\leq t\leq\omega}\int^{\omega}_0\left|\frac{\partial G(t,s)}{\partial t}\right|\left(\frac{c(s)}{\alpha}y^{1-\alpha-\alpha\rho_2}(s)+\frac{e(s)}{\alpha}y^{1-\alpha}(s)+(1-\alpha)\frac{|y'(s)|^2}{y(s)}+\frac{b(s)}{\alpha}\right)ds\\
    \leq&(G^*+\delta G_*)\left(\frac{e^*}{\alpha}R^{1-\alpha}+\frac{(1-\alpha)\delta^2}{\sigma}R+\frac{b^*+c^*}{\alpha}\right)\omega\\
    \leq&R=\|y\|
    \end{split}
\end{equation*}
holds if $R$ is large enough. This means that inequality \eqref{eq3.3} holds.

In the following, we prove that inequality \eqref{eq3.5}.

Case I: if $b_*+c_*\geq0$, the inequality
\begin{equation}\label{eq3.7}
    \begin{split}
    y_1=&\mathcal{T}y_1+\lambda_0 y_0\\
    =&\int^{\omega}_0G(t,s)\left(\frac{e(s)}{\alpha}y_1^{1-\alpha}(s)+(1-\alpha)\frac{|y'_1(s)|^2}{y_1(s)}+\frac{b(s)+c(s)}{\alpha}\right)ds+\lambda_0\\
    >&G_*\left(\frac{e_*}{\alpha}(\sigma r)^{1-\alpha}+\frac{b_*+c_*}{\alpha}\right)\omega\\
    \geq&\frac{G_*e_*\sigma^{1-\alpha}\omega}{\alpha}r^{1-\alpha}\\
    \geq&r=\|y_1\|
    \end{split}
\end{equation}
can be satisfied if $r>0$ is small enough.

Case II: if $b_*+c_*<0$, from condition $(H_4)$, we have
\begin{equation}\label{eq3.8}
    \begin{split}
    y_1=&\int^{\omega}_0G(t,s)\left(\frac{e(s)}{\alpha}y_1^{1-\alpha}(s)+(1-\alpha)\frac{|y'_1(s)|^2}{y_1(s)}+\frac{b(s)+c(s)}{\alpha}\right)ds+\lambda_0\\
    >&G_*\int^{\omega}_0\left(\frac{e_*}{\alpha}(\sigma r)^{1-\alpha}+\frac{b^+(s)}{\alpha}+\frac{b^-(s)}{\alpha}+\frac{c_*}{\alpha}\right)ds\\
    \geq&G_*\int^{\omega}_0\left(\frac{e_*\sigma^{1-\alpha}}{\alpha}r^{1-\alpha}+\frac{b^+(s)}{\alpha}+\frac{b_*}{\alpha}+\frac{c_*}{\alpha}\right)ds\\
    >&G_*\int^{\omega}_0\frac{b^+(s)}{\alpha}ds\\
    =&\frac{G_*\bar{b}^+\omega}{\alpha}\\
    \geq&r=\|y_1\|,
    \end{split}
\end{equation}
where $b^-(t):=\min\{0,b(t)\}$. Both cases I and II contradict with $y_1\in K\cap\partial\Lambda_1$, so inequality \eqref{eq3.5} follows.

Therefore, Lemma \ref{Lemma 3.1} shows equation \eqref{eq1.2} has a positive $\omega$-periodic solution $y(t)$ satisfying $r\leq\|y\|\leq R$.
\end{proof}

\begin{Corollary}{\label{Corollary 3.3}} Assume that equation \eqref{eq2.3}, condition $(H_2)$ and $\rho_1=\rho_2$ hold. Then equation \eqref{eq1.3} has at least one positive $\omega$-periodic solution $y(t)$ with $r\leq\|y\|\leq R$ if one of the following two cases holds:

Case I: $b_*+c_*\geq0$;

Case II: $b_*+c_*<0$ and condition $(H_4)$ hold.
\end{Corollary}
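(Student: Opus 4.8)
The plan is to run the same four-step cone argument used for Theorems~\ref{Theorem 3.1} and~\ref{Theorem 3.2}, after first exploiting the simplification peculiar to $\rho_1=\rho_2$. Since then $1-\alpha-\alpha\rho_2=0$, the term $\frac{c(t)}{\alpha}y^{1-\alpha-\alpha\rho_2}(t)$ is just the $y$-independent function $\frac{c(t)}{\alpha}$, which merges with $\frac{b(t)}{\alpha}$, so \eqref{eq1.2} becomes
$$y''(t)+p(t)y'(t)+\tfrac{q(t)}{\alpha}y(t)=\tfrac{e(t)}{\alpha}y^{1-\alpha}(t)+(1-\alpha)\tfrac{|y'(t)|^{2}}{y(t)}+\tfrac{b(t)+c(t)}{\alpha}.$$
Accordingly I would define $(\mathcal Ty)(t)=\int_0^\omega G(t,s)\big(\tfrac{e(s)}{\alpha}y^{1-\alpha}(s)+(1-\alpha)\tfrac{|y'(s)|^{2}}{y(s)}+\tfrac{b(s)+c(s)}{\alpha}\big)ds$, take $\Lambda_1,\Lambda_2$ the open balls in $C^1_\omega$ of radii $r$ and $R$, and aim to verify the two hypotheses of Lemma~\ref{Lemma 3.1}; a fixed point of $\mathcal T$ in $K\cap(\overline\Lambda_2\setminus\Lambda_1)$ is then the desired $\omega$-periodic solution of \eqref{eq1.2}, hence of \eqref{eq1.1}.

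For Step~1 (cone invariance), note that for $y\in K\cap(\overline\Lambda_2\setminus\Lambda_1)$ one has $\sigma r\le y(t)\le R$ and $|y'(t)|\le\delta y(t)$, so the nonlinearity is bounded below by $\frac{e_*\sigma^{1-\alpha}}{\alpha}r^{1-\alpha}+\frac{b_*+c_*}{\alpha}$; this is $\ge 0$ automatically in Case~I, and is $>0$ in Case~II by the first inequality of $(H_4)$. Positivity of $G$ (condition $(A)$) together with the definitions of $\sigma$ and $\delta$ then gives $\min_t(\mathcal Ty)(t)\ge\sigma\|\mathcal Ty\|$ and $|(\mathcal Ty)'(t)|\le\delta(\mathcal Ty)(t)$ by exactly the chain of estimates in Step~1 of Theorem~\ref{Theorem 3.1}, so $\mathcal T(K\cap(\overline\Lambda_2\setminus\Lambda_1))\subset K$. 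Step~2 (complete continuity) is verbatim that of Theorem~\ref{Theorem 3.1}: $\mathcal T$ has a uniformly bounded range with a derivative bound coming from $\max\bigl|\tfrac{\partial G}{\partial t}\bigr|=\delta G_*$, hence an equicontinuous range, and Arzelà--Ascoli finishes it.

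For Step~3 (hypothesis $(i)$ of Lemma~\ref{Lemma 3.1}), on $K\cap\partial\Lambda_2$ I would estimate
$$\|\mathcal Ty\|=\|\mathcal Ty\|_\infty+\|(\mathcal Ty)'\|_\infty\le(G^*+\delta G_*)\Bigl(\tfrac{e^*}{\alpha}R^{1-\alpha}+\tfrac{(1-\alpha)\delta^{2}}{\sigma}R+\tfrac{b^*+c^*}{\alpha}\Bigr)\omega,$$
using $y\le R$, $|y'|\le\delta R$, $y\ge\sigma R$ in the repulsive term. Because $1-\alpha<1$, the $R$-growth of the right-hand side is asymptotically linear with slope $(G^*+\delta G_*)\tfrac{(1-\alpha)\delta^{2}}{\sigma}\omega$, which is $<1$ by $(H_2)$; hence the right-hand side is $\le R=\|y\|$ once $R$ is chosen large, giving \eqref{eq3.3}.

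Step~4 (hypothesis $(ii)$) is the only genuinely new point and the main obstacle. Taking $y_0\equiv 1$ and arguing by contradiction from $y_1=\mathcal Ty_1+\lambda_0y_0$ with $y_1\in K\cap\partial\Lambda_1$, $\lambda_0>0$, one discards the nonnegative terms $(1-\alpha)|y_1'|^2/y_1$ and $\lambda_0$ and uses $G\ge G_*$. In Case~I the bound $b_*+c_*\ge 0$ lets one continue to $y_1>\tfrac{G_*e_*\sigma^{1-\alpha}\omega}{\alpha}r^{1-\alpha}\ge r$ for $r$ small (again since $1-\alpha<1$), contradicting $\|y_1\|=r$. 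In Case~II the pointwise bound $b_*+c_*<0$ is worthless, so instead I keep $b=b^++b^-$ under the integral, bound $b^-(s)\ge b_*$ and $c(s)\ge c_*$ pointwise, invoke the first half of $(H_4)$ to discard the now-positive quantity $\tfrac{e_*\sigma^{1-\alpha}}{\alpha}r^{1-\alpha}+\tfrac{b_*+c_*}{\alpha}$, and are left with $y_1>G_*\int_0^\omega\tfrac{b^+(s)}{\alpha}ds=\tfrac{G_*\bar b^+\omega}{\alpha}\ge r$ by the second half of $(H_4)$ --- again contradicting $\|y_1\|=r$. The delicate step is precisely this: retaining the full function $b^+(t)$ inside the integral rather than replacing it by a constant, so that the \emph{average} $\bar b^+$ (not a pointwise minimum) surfaces and $(H_4)$ can close the estimate. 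With both cases of \eqref{eq3.5} established, Lemma~\ref{Lemma 3.1} yields a fixed point $y$ with $r\le\|y\|\le R$.
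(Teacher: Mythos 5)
Your proposal is correct and is essentially the paper's own argument: the corollary is obtained by specializing Theorem \ref{Theorem 3.3}, whose proof is exactly your four-step cone scheme for the rewritten equation with $\frac{b(t)+c(t)}{\alpha}$, including the Case II device of keeping $b^{+}(s)$ under the integral so that the average $\bar b^{+}$ and condition $(H_4)$ close the estimate. The only point to state explicitly is that for equation \eqref{eq1.3} (no damping) the positivity condition $(A)$ you invoke in Step 1 follows from inequality \eqref{eq2.3} via Lemma \ref{Theorem 2.3}, which is precisely how the corollary's hypothesis replaces condition $(A)$ in the theorem.
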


\begin{Remark}
 Corollary \ref{Corollary 3.3} generalizes the results in literature \cite{Cheng2021}.
\end{Remark}

\section{\Large{Example}}

\par\noindent

In this section, we give three examples and the simulations to illustrate our results.

\begin{Example}{\label{Example 4.1}}
Consider differential equation with indefinite and repulsive singularities:
\begin{equation}{\label{eq4.1}}
x''+\frac{1}{40}x=\frac{1+2\cos(3t)}{x^{\frac{3}{2}}}+\frac{e^{2\sin(3t)}}{x^{\frac{13}{10}}}+10+\cos(3t).
\end{equation}

Comparing equation \eqref{eq4.1} with equation \eqref{eq1.3}, we see that $q(t)=\frac{1}{40}$, $b(t)=1+2\cos(3t)$, $b_*=-1$, $c(t)=e^{2\sin(3t)}$, $c_*=e^{-2}$, $e(t)=10+\cos(3t)$, $e_*=9$, $\rho_1=\frac{3}{2}>\rho_2=\frac{13}{10}$, so we can apply Corollary \ref{Corollary 3.1}.

In view of $\alpha=\frac{1}{1+\rho_1}=\frac{2}{5}$, $\frac{q(t)}{\alpha}=\frac{1}{16}$, $\xi=\sqrt{\frac{q(t)}{\alpha}}=\frac{1}{4}$, $\omega=\frac{2\pi}{3}$, we know $\xi<\frac{\pi}{\omega}$, equation \eqref{eq2.3} hold. From equation \eqref{eq2.2}, we can obtain $G_*=\frac{2\sqrt{2+\sqrt{3}}}{\sqrt{2-\sqrt{3}}}$, $G^*=\frac{4}{\sqrt{2-\sqrt{3}}}$, $\max\limits_{0\leq s,t\leq\omega}\left|\frac{\partial G(t,s)}{\partial t}\right|=\frac{\sqrt{2-\sqrt{3}}}{2}$, $\sigma=\frac{G_*}{G^*+\max\limits_{0\leq s,t\leq\omega}\left|\frac{\partial G(t,s)}{\partial t}\right|}=\frac{4\sqrt{2+\sqrt{3}}}{10-\sqrt{3}}$, $\delta=\frac{\max\limits_{0\leq
s,t\leq\omega}\left|\frac{\partial G(t,s)}{\partial t}\right|}{G_*}=\frac{2-\sqrt{3}}{4\sqrt{2+\sqrt{3}}}$.

Conditions $(H_1)$ and $(H_2)$ can be satisfied by calculation. So by using Corollary \ref{Corollary 3.1}, equation \eqref{eq4.1} has at least one positive $\frac{2\pi}{3}$-periodic solution.

\begin{figure}[H]
\centering
   \includegraphics[width=1\textwidth]{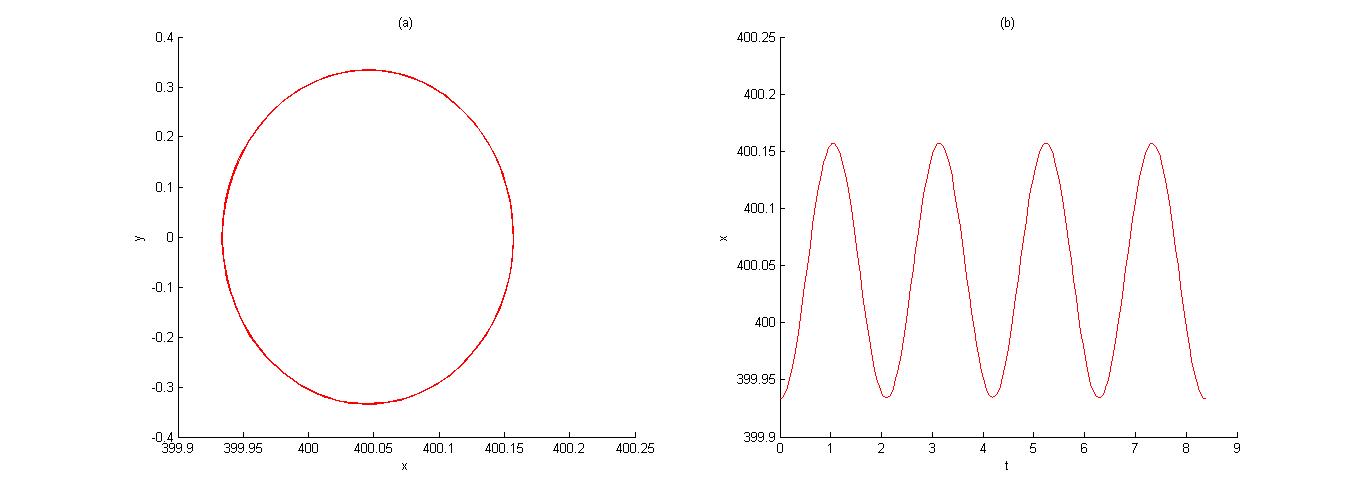}
\caption{Phase diagram of the $\frac{2\pi}{3}$-periodic solution and
its time series diagram. (a) Phase diagram of the periodic solution with initial value
$(399.93015,0)$. (b) Time series diagram of the periodic solution.}
\end{figure}

\end{Example}

\begin{Example}{\label{Example 4.2}}
Consider differential equation with indefinite and repulsive singularities:
\begin{equation}{\label{eq4.2}}
x''+\frac{1}{40}x=\frac{1+2\cos(3t)}{x^{\frac{3}{2}}}+\frac{e^{2\sin(3t)}}{x^2}+10+\cos(3t).
\end{equation}

Since $\rho_1=\frac{3}{2}<\rho_2=2$, we can apply Corollary \ref{Corollary 3.2}. Similar to Example \ref{Example 4.1}, we can get equation \eqref{eq2.3}, conditions $(H_2)$ and $(H_3)$ hold. So by using Corollary \ref{Corollary 3.2}, equation \eqref{eq4.2} has at least one positive $\frac{2\pi}{3}$-periodic solution.

\begin{figure}[H]
\centering
   \includegraphics[width=1\textwidth]{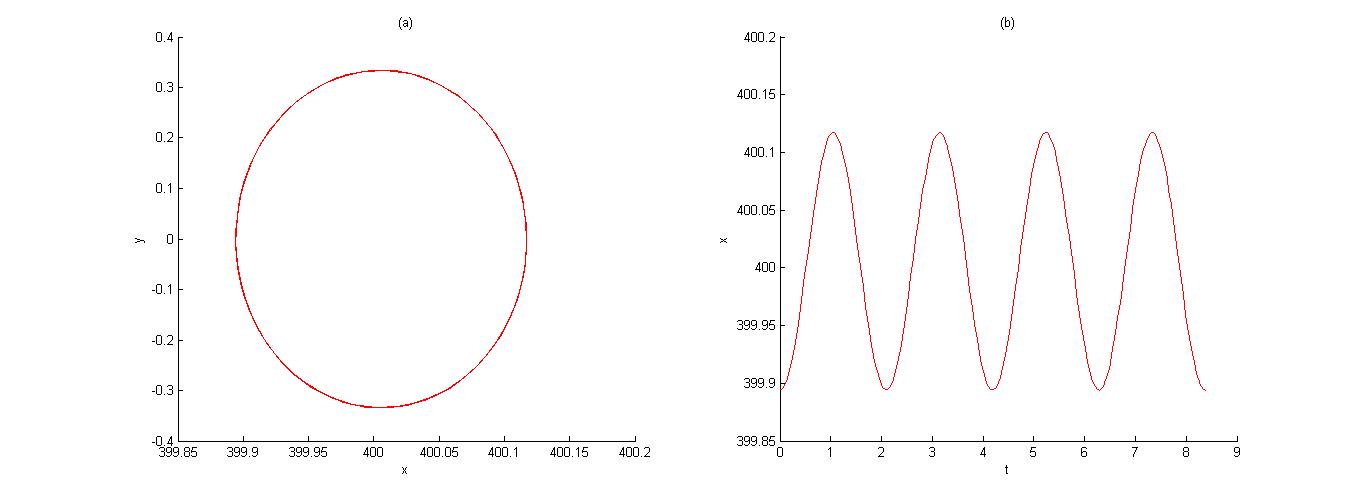}
\caption{Phase diagram of the $\frac{2\pi}{3}$-periodic solution and
its time series diagram. (a) Phase diagram of the periodic solution with initial value
$(399.8941,0)$. (b) Time series diagram of the periodic solution.}
\end{figure}

\end{Example}

\begin{Example}{\label{Example 4.3}}
Consider differential equation with indefinite and repulsive singularities:
\begin{equation}{\label{eq4.3}}
x''+\frac{1}{40}x=\frac{1+2\cos(3t)}{x^{\frac{3}{2}}}+\frac{e^{2\sin(3t)}}{x^{\frac{3}{2}}}+10+\cos(3t).
\end{equation}

Obviously, $\rho_1=\frac{3}{2}=\rho_2$ and $b_*+c_*<0$. Moreover,
$$\bar{b}^+=\frac{1}{\omega}\int_0^\omega b^+(t)dt=\frac{3}{2\pi}\int_{-\frac{2\pi}{9}}^{\frac{2\pi}{9}}(1+2\cos(3t))dt=\frac{2}{3}+\frac{\sqrt{3}}{\pi}.$$
By Example \ref{Example 4.1}, we can get equation \eqref{eq2.3}, conditions $(H_2)$ and $(H_4)$ hold. So by using Case II of Corollary \ref{Corollary 3.3}, equation \eqref{eq4.3} has at least one positive $\frac{2\pi}{3}$-periodic solution.

\begin{figure}[H]
\centering
  \includegraphics[width=1\textwidth]{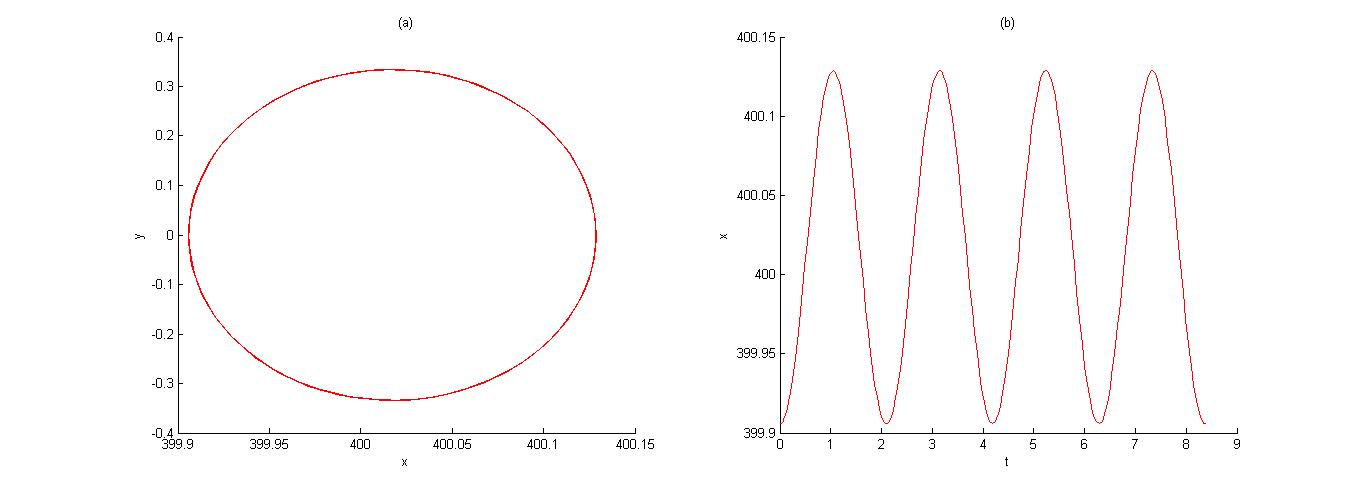}
\caption{Phase diagram of the $\frac{2\pi}{3}$-periodic solution and
its time series diagram. (a) Phase diagram of the periodic solution with initial value
$(399.9045,0)$. (b) Time series diagram of the periodic solution.}
\end{figure}

\end{Example}

\section*{Data Availability Statement}

My manuscript has no associated data. It is pure mathematics.

\section*{Conflict of interest statement}
 The authors declare that there is no conflict of interests regarding the publication of this article.

\section*{Contributions}
 We declare that all the authors have same contributions to this paper.

\end{document}